\newtheorem{theorem}{Theorem}
\newtheorem{proposition}[theorem]{Proposition}
\newtheorem{corollary}[theorem]{Corollary}
\newtheorem{lemma}[theorem]{Lemma}
\newtheorem{definition}{Definition}
\newtheorem{remark}{Remark}
\newtheorem{exam}{Example}
\let\m=\mu
\let\P=\Pi
\newcommand{\R}{\mathbb{R}}
\newcommand{\LLW}{\mathcal{L}_{A}}
\begin{document}

\title{Level-2 IFS Thermodynamic Formalism: Gibbs probabilities in the space of probabilities and the push-forward map}
\author{A. O. Lopes and E. R. Oliveira}
\maketitle

\begin{abstract}
 We will denote by $\mathcal{M}$ the space of Borel probabilities on the  symbolic space $\Omega=\{1,2...,m\}^\mathbb{N}$. $\mathcal{M}$ is equipped Monge-Kantorovich metric. We consider here the push-forward map $\mathfrak{T}:\mathcal{M} \to \mathcal{M}$ as a dynamical system. The space of Borel probabilities on $\mathcal{M}$ is denoted by $\mathfrak{M}$.  Given a continuous function $A: \mathcal{M}\to \mathbb{R}$, an {\it a priori} probability $\Pi_0$ on $\mathcal{M}$, and a certain convolution operation  acting on pairs of probabilities on $\mathcal{M}$, we define an associated Level-2 IFS Ruelle operator.  We show the existence  of an eigenfunction and an eigenprobability $\hat{\Pi}\in\mathfrak{M}$ for such an operator. Under a normalization condition for $A$, we  show the existence of some  $\mathfrak{T}$-invariant probabilities $\hat{\Pi}\in\mathfrak{M}.$ We are able to define the variational entropy of such $\hat{\Pi}$ and a  related maximization pressure problem associated to $A$. In some particular examples, we show how  to get eigenprobabilities solutions  on $\mathfrak{M}$ for the  Level-2   Thermodynamic Formalism problem from  eigenprobabilities on $\mathcal{M}$ for the classical (Level-1) Thermodynamic Formalism.  These examples highlight the fact that our approach is a natural generalization of the classic case.
\end{abstract}

\vspace{2mm}

Keywords: IFS Thermodynamic Formalism, Level-2 problems, symbolic space, measure on the space of measures, Gibbs probabilities, dynamics of the push-forward map, entropy, convolution, Ruelle operator, eigenprobability

\vspace{2mm}

Mathematics Subject Classification (2020): 37D35


\bigskip

\bigskip

\bigskip

\section{Introduction}

Denote by $\mathcal{M}$ the space of Borel probabilities on the  symbolic space $\Omega=\{1,2...,m\}^\mathbb{N}$. We consider here the push-forward map $\mathfrak{T}:\mathcal{M} \to \mathcal{M}$ as a dynamical system (see Definition \ref{kler12}).
First, we will briefly  investigate the dynamical properties  of the push-forward map in  Section \ref{pu} (related results appear in \cite{Bauer}, \cite{BVerm}, \cite{Verm} and\cite{Fagner}). Later, given  a continuous function (a potential) 
$A: \mathcal{M}\to \mathbb{R}$ we will introduce an associated Ruelle  operator
acting on continuous functions $f: \mathcal{M}\to \mathbb{R}$,  and we will present a version of the Ruelle Theorem about the existence of eigenvalues, eigenprobabilities, etc... For the classical Ruelle Theorem see \cite{PP90} (or  \cite{BCLMS23}, \cite{LMMS}, \cite{L1}).  

$\mathcal{M}$ is equipped Monge-Kantorovich metric (see \cite{Villa1} and \cite{Villa2}).
The space of Borel probabilities on $\mathcal{M}$ is denoted by $\mathfrak{M}$. In order to define our Ruelle operator it will be essential to consider an {\it a priori} probability $\Pi_0$ on $ \mathcal{M}$, and the introduction of a certain convolution operation acting on pairs of probabilities on $\mathcal{M}$ (see Section \ref{concon}); it will be also necessary to combine this convolution with the action of the  push-forward map $\mathfrak{T}$ (see Section \ref{IFS1}).

At the beginning of Subsection \ref{oo1} we present the main assumptions for defining an IFS Ruelle operator $B_{\Pi_0}$ on our setting, in order to  be able to obtain (after some work), from already known general results on IFS, the main conclusions of the paper.   For example, one of our main results is  Theorem \ref{exist posit eigenfunction} which claims

\begin{theorem}\label{exist posit eigenfunction22}
 If $A: \mathcal{M}  \to \mathbb{R}$ is a Lipschitz potential, then there exists a positive and continuous eigenfunction $h: \mathcal{M}  \to \mathbb{R}$, such that, $B_{\Pi_0}(h) = \lambda h$, $\lambda>0.$
\end{theorem}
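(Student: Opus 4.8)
The plan is to realize $B_{\Pi_0}$ as a Ruelle operator for an iterated function system over the compact base space $\mathcal{M}$ and then invoke the general IFS machinery; concretely, this means checking that $B_{\Pi_0}$ is a positive bounded linear operator on $C(\mathcal{M})$ and that the data ($\Pi_0$, the convolution, and $\mathfrak{T}$) assemble into a contracting family of branch maps with weights regular enough for the classical fixed-point and equicontinuity arguments to apply. First I would record that $\mathcal{M}$, being the space of Borel probabilities on the compact space $\Omega$, is itself compact metric in the Monge-Kantorovich metric, and that $B_{\Pi_0}$ is positive (its weights are exponentials of the real potential and $\Pi_0$ is a probability) and bounded (since the Lipschitz function $A$ is bounded on the compact $\mathcal{M}$). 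Compactness of $\mathcal{M}$ is what makes every subsequent compactness argument available.

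Second, I would extract the eigenvalue together with an eigenprobability before touching the eigenfunction. Passing to the dual operator $B_{\Pi_0}^{*}$ on the space of measures on $\mathcal{M}$, consider the normalized map $N(\hat\Pi) = B_{\Pi_0}^{*}(\hat\Pi)/B_{\Pi_0}^{*}(\hat\Pi)(\mathbf{1})$, which carries $\mathfrak{M}$ into itself. Since $\mathfrak{M}$ is convex and weak-$*$ compact (again because $\mathcal{M}$ is compact metric) and $N$ is weak-$*$ continuous, the Schauder-Tychonoff fixed point theorem produces $\hat\Pi \in \mathfrak{M}$ with $N(\hat\Pi) = \hat\Pi$, i.e. $B_{\Pi_0}^{*}(\hat\Pi) = \lambda\,\hat\Pi$ with $\lambda = B_{\Pi_0}^{*}(\hat\Pi)(\mathbf{1})$. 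The uniform lower bound $e^{-\|A\|_\infty}$ on the weights forces $\lambda \ge e^{-\|A\|_\infty} > 0$.

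Third, I would build the eigenfunction from the normalized iterates $h_n := \lambda^{-n} B_{\Pi_0}^{n}(\mathbf{1})$. Integrating against the eigenprobability gives $\int h_n \, d\hat\Pi = 1$ for all $n$, so if the family $\{h_n\}$ is shown to be uniformly bounded away from $0$ and $\infty$ and equicontinuous, then Arzel\`{a}-Ascoli (combined, if needed, with a Ces\`{a}ro average to secure an actual eigenvector rather than a mere limit point) yields a subsequence converging uniformly to a continuous $h$ with $0 < c \le h \le C$, hence $h > 0$; continuity of $B_{\Pi_0}$ on $C(\mathcal{M})$ then upgrades $h_n \to h$ into $B_{\Pi_0}(h) = \lambda h$.

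The heart of the argument, and the step I expect to be the main obstacle, is the bounded-distortion estimate underlying both the uniform bounds and the equicontinuity of $\{h_n\}$. Here the two hypotheses finally interact: the branch maps on $\mathcal{M}$ assembled from the convolution with $\Pi_0$ and the push-forward dynamics $\mathfrak{T}$ are contractions in the Monge-Kantorovich metric (the inverse-branch mechanism behind any Ruelle operator prepends a coordinate and thereby shrinks cylinders by a fixed factor, while the push-forward is non-expanding for this metric), whereas the Lipschitz regularity of $A$ bounds the ratio of accumulated weights along any finite itinerary by a constant multiple of the Monge-Kantorovich distance between the base measures. Summing along the geometric contraction rate produces a modulus of continuity for $h_n$ that is uniform in $n$, and the same comparison of the weights against their $\hat\Pi$-average (which equals $1$) pins $h_n$ between two positive constants. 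Establishing this estimate in the present convolution-and-push-forward setting is the one place where the specific structure of $B_{\Pi_0}$, rather than soft functional analysis, does the work.
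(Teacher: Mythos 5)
Your proposal is correct, and its overall strategy --- recognizing $B_{\Pi_0}$ as the transfer operator of a contractive IFS on the compact space $(\mathcal{M},d_{MK})$ with Lipschitz log-weights --- is exactly the paper's strategy. The difference lies in what happens after that identification: the paper stops there. It verifies that the setup \eqref{esteS} satisfies hypotheses (H1)--(H4) of \cite{BOR23}, that the branch maps $\phi_{\nu}(\mu)=\mathfrak{T}(\nu)*_n\mu$ are uniform contractions in $\mu$ with constant $2^{-n}$ (Lemma \ref{con1}, Example \ref{conc}, Corollary \ref{este1}), and that $u(\mu,\nu)=A(\phi_{\nu}(\mu))$ has the prescribed regularity when $A$ is Lipschitz, and then it quotes the eigenfunction theorem wholesale as \cite[Theorem 2.6]{BOR23}. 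You instead reconstruct the inside of that citation: an eigenmeasure for $B_{\Pi_0}^*$ via Schauder--Tychonoff applied to the normalized dual map (which the paper records separately, by citation, as Theorem \ref{prop-exist-auto-medida}), the normalized iterates $h_n=\lambda^{-n}B_{\Pi_0}^n(\mathbf{1})$ calibrated by $\int h_n\,d\hat{\Pi}=1$, a bounded-distortion estimate of the form $\mathrm{Lip}(A)\sum_k s^k\,d_{MK}(\mu,\mu')$ with $s=2^{-n}$, and Arzel\`a--Ascoli together with Ces\`aro averaging to turn a limit point into an actual eigenvector (a subtlety you rightly flag). What the paper's route buys is brevity and consistency with its stated aim of deriving the Level-2 formalism from known general IFS results; what your route buys is a self-contained argument that makes explicit exactly where the two hypotheses --- Lipschitz regularity of $A$ and contractivity of the convolution --- are consumed, which is indeed, as you say, where all the real work is.

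One small correction: your parenthetical claim that ``the push-forward is non-expanding'' in $d_{MK}$ is false. Since $d_{\Omega}(\sigma x,\sigma y)$ can equal $2\,d_{\Omega}(x,y)$, the map $\mathfrak{T}$ is only $2$-Lipschitz with respect to $d_{MK}$. This is harmless for your argument, because in $\phi_{\nu}(\mu)=\mathfrak{T}(\nu)*_n\mu$ the push-forward is applied to the parameter $\nu$, not to the variable $\mu$; the contraction in $\mu$ comes entirely from the convolution $*_n$ prepending $n$ symbols, i.e., from the paper's Lemma \ref{con1} with $s=2^{-n}$, and no property of $\mathfrak{T}$ beyond continuity is needed for the distortion estimate.
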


We will provide examples later on  in the text (see Examples \ref{prok}, \ref{prof} and \ref{prok1});  they will clarify to the reader the unequivocal fact that the results obtained in our setting are a natural generalization of classical Thermodynamical Formalism (in the sense of \cite{PP90}); which can be considered the Level-1 setting.

It will be natural to consider in our Level-2 setting the concept of variational entropy of a holonomic probability, the pressure problem, and equilibrium probabilities (see Definitions  \ref{entropy} and \ref{lkle}  on Subsection \ref{oo1}). Later, we present our main result which  is the relation between the Ruelle Theorem and the equilibrium probability (see expression \eqref{kler39}).
In the Example \ref{prof1} we show that our formalism can be used to provide examples of  $\mathfrak{T}$-invariant probabilities on $\mathcal{M}.$

General references in Thermodynamic Formalism for  IFS are \cite{Arbieto2016WeaklyHyperbolic}, \cite{BOR23}, \cite{cioletti2019}, \cite{Urb},  \cite{LOPV},   \cite{LM1}, \cite{LO} and \cite{Miha}.
 
\section{The push-forward map  acting in the space of probabilities on the symbolic space} \label{pu}

In the present section, we will describe preliminary results (we also present several examples to facilitate the understanding of the theory) that will be needed later in other sections.


We consider the shift acting on the symbolic space $\Omega=\{1,2,...,m\}^\mathbb{N}.$  In $\Omega$ we consider the usual metric $d=d_{\Omega}:\Omega^2 \to \mathbb{R} $ which makes $\Omega$ a compact space:
\begin{equation}\label{dist omeg} d_{\Omega}(\alpha, \beta):=\left\{
  \begin{array}{ll}
    0, & \alpha=\beta \\
    \frac{1}{2^{k}}, & k=\min \alpha_i \neq \beta_i
  \end{array}
\right.
\end{equation}
for any $\alpha, \beta \in \Omega$.

As we mentioned before, we denote by $\mathcal{M}$ the set of probabilities on the Borel sigma-algebra  which is a compact convex space when considering the Hutchinson distance (also called  Monge-Kantorovich or 1- Wasserstein) $d_{MK}: \mathcal{M}^2 \to \mathbb{R}$
defined by
\begin{equation}\label{Wass1}
d_{MK}(\mu,\nu)=\sup_{f \in {\rm Lip}_{1}(\Omega)} \int_{\Omega}f d\mu - \int_{\Omega}f d\nu,
\end{equation}
for any  $\mu, \nu \in \mathcal{M}$, which equivalent to the weak-$\ast$ convergence because $\Omega$ is compact, see \cite{Arbieto2016WeaklyHyperbolic} Theorem 1.6.

Note that if
$ d(x_0,x_1)\leq \epsilon,$ then $\,d_{MK}(\delta_{x_0},\delta_{x_1})\leq \epsilon.$

 We denote by $\mathcal{C}= C(\Omega,\mathbb{R})$ the set of real continuous functions with domain $\Omega$ and by $\mathfrak{C}=C(\mathcal{M},\mathbb{R})$ the set of real continuous functions (using the Monge-Kantorovich metric $d_{MK}$) with domain $\mathcal{M}.$

We denote $\mathcal{M}_\sigma^i$ the set of $\sigma$-invariant probabilities and by $\mathcal{M}^e_\sigma$ the set of $\sigma$-ergodic probabilities.

\begin{definition} \label{kler12} Given   probability $\mu_1\in \mathcal{M}$, the push-forward of $\mu_1$ is the probability $\mathfrak{T}(\mu_1)=\mu_2$ such that for all
Borel set $E$ we get that  $\mu_2 (E) = \mu_1 (\sigma^{-1} (E)).$ $\mathfrak{T}$ is called the {\bf push-forward map} acting on the space of probabilities on $\Omega$.
\end{definition}

To say that $\mu$ is $\sigma$-invariant is the same that to say that $\mathfrak{T}(\mu)=\mu.$

Equivalently, for any $f \in C(\Omega, \mathbb{R})$
\begin{equation} \label{pum0} \int f \,d \,\mathfrak{T} (\mu_1) = \int (f \circ \sigma)\, d \mu_1.
\end{equation}

In particular,  
\begin{equation} \label{pum000} \mathfrak{T}(\delta_{x})=\delta_{\sigma (x)}.\end{equation}

Note that if $x_1,x_2$ are such that  $\sigma(x_1)=x_0= \sigma(x_2)$, then, 
\begin{equation} \label{pum1}\mathfrak{T}(\delta_{x_1})=\delta_{\sigma (x_1)}=\delta_{x_0} =\mathfrak{T}(\delta_{\sigma(x_1)})=\mathfrak{T}(\delta_{x_2}).\end{equation}

Moreover,
\begin{equation} \label{pum2} \mathfrak{T}^n(\delta_{x_1})= \delta_{\sigma^n (x_1)}
.\end{equation}

We denote by $\mathfrak{M} $ the set of probabilities on the Borel sigma-algebra of $\mathcal{M}$ which is a non-empty
compact separable convex space, when considering a metric $d_{MK}$ associated to the weak-$\ast$ convergence (the
 Monge-Kantorovich metric for instance). We denote $\mathfrak{M}_\mathfrak{T}$ the set of $\mathfrak{T}$-invariant probabilities and by $\mathfrak{M}^e_\mathfrak{T}$ the set of $\mathfrak{T}$-ergodic probabilities.

It is important not to confuse the concept that a probability measure  $\mu \in \mathcal{M}$ is invariant for $\mathfrak{T}$, in the sense of $\mathfrak{T}(\mu)= \mu$, with the statement that  a probability measure $\Pi\in \mathfrak{M}$   is invariant for the dynamical  transformation $\mathfrak{T} :\mathcal{M} \to \mathcal{M}$, that is 
$\Pi \in \mathfrak{M}_\mathfrak{T}^i.$ The later means: for any continuous function $F:\mathcal{M} \to \mathbb{R}$
\begin{equation} \label{puru2} \int F(\rho) d \Pi (\rho) = \int (F \circ \mathfrak{T})(\rho) d \Pi (\rho).
\end{equation}

Via the Ruelle operator, we will show the existence of nontrivial $\mathfrak{T}$-invariant probabilities in Example \ref{prof1} (see also Remark \ref{sim1}).

\begin{remark} \label{pper} As $\mathfrak{T}^n(\delta_{x_1})= \delta_{\sigma^n (x_1)}$, we get that in the  case
$\sigma^n(x_1)= x_1$, then, $\delta_{x_1}, \delta_{\sigma(x_1)},..., \delta_{\sigma^{n-1}(x_1)}$ is a periodic orbit for
of period $n$ for $\mathfrak{T}$. Note also that
$\mathfrak{T} (\sum_{j=1}^ k p_j \, \delta_{x_j})= \sum_{j=1}^ k p_j \, \delta_{\sigma(x_j)},$
where $\sum_{j=1}^ k p_j =1$, $p_j \geq 0.$

Then,
$\sum_{j=1}^ k p_j \, \delta_{x_j}\in \mathfrak{T}^{-1} ( \sum_{j=1}^ k p_j \, \delta_{\sigma(x_j)}).$

If $\mu$ is $\sigma$-invariant, as $  \mathfrak{T}(\mu) = \mu$, we get that  $\mu \in \mathfrak{T}^{-1}(\mu) .$

Therefore, if $\sigma^n(x_1)= x_1$, then
\begin{equation} \label{pum3} \mathfrak{T} ( \frac{1}{n} \sum_{j=0}^ {n-1} \, \delta_{\sigma^j (x_1)})= ( \frac{1}{n} \sum_{j=0}^ {n-1} \, \delta_{\sigma^j (x_1)})
\end{equation}
and
$ \frac{1}{n} \sum_{j=0}^ {n-1} \, \delta_{\sigma^j (x_1)}\in \mathfrak{T}^{-1} ( \frac{1}{n} \sum_{j=0}^ {n-1} \, \delta_{\sigma^j (x_1)}). $

\end{remark}

The transformation $\mathfrak{T}:\mathcal{M} \to \mathcal{M}$ is continuous (see \cite{Bauer}), takes probabilities to probabilities  and is not injective.

\begin{exam} \label{ex1}

Suppose $\sigma (\tilde{x}) = \tilde{x}.$ Then, $\delta_{ \delta_{\tilde{x}}}
$ is $\mathfrak{T}$-invariant.


More generally, if $\mu$ is $\sigma$-invariant, then $\Pi=\delta_{\mu}\in \mathfrak{M}$ is $\mathfrak{T}$-invariant. Indeed, given a continuous function $f: \mathcal{M} \to \mathbb{R}$, we get that
\begin{equation} \label{pum4}\int (f \circ \mathfrak{T})  \, d\, \delta_{\mu}=  f ( \,\mathfrak{T}\,( \mu )\,) = f (  \mu )= \int  f \, d\,\delta_{ \mu}.
\end{equation}

Then, $\delta_{ \mu}  \in \mathfrak{M}_\mathfrak{T}^i$. That is $\mathfrak{T}^*$ acting on $\mathfrak{M}$ is such that $\Pi =\delta_{ \mu}  $ satisfies  $\mathfrak{T}^* (\Pi)=\P$, that is $\mathfrak{T}^* (\delta_{\mu} )=\delta_{\mu}$.

More generally given $\mu_j\in \mathcal{M}_\sigma^i$, $j=1,2,..,k$, then, when $\sum_{j=1}^ k p_j=1$, $p_j\geq 0$, $j=1,2,..,k$
\begin{equation} \label{pum5} \mathfrak{T}^* (\sum_{j=1}^ k p_j \, \delta_{\mu_j})= \sum_{j=1}^ k p_j \, \delta_{\mu_j}.
\end{equation}

Therefore, $\sum_{j=1}^ k p_j \, \delta_{\mu_j}\in \mathfrak{M}_\mathfrak{T}$

\end{exam}

Note that
\begin{equation} \label{WD} d_{MK} (\delta_{x_0} , \delta_{y_0})\leq d(x_0,y_0).
\end{equation}

Moreover, if $\mu_n \to \mu$, then, $\mathfrak{T} (\mu_n) \to \mathfrak{T} (\mu)$.

Note that $\mathfrak{T}$ is not a $d$ to $1$ map:
consider $x \neq y$, in $\Omega$ such that $\sigma(x)=\sigma(y)=z$, and the family $\mu_{t}=t \delta_x + (1-t) \delta_y$ for $t \in [0,1]$, then
$$\mathfrak{T}(\mu_{t}) = t \delta_{\sigma(x)} + (1-t) \delta_{\sigma(x)} =\delta_z, \, \forall t$$
thus, $\mathfrak{T}^{-1}$ contains infinitely many distinct measures (Lemma \ref{ouyt} also confirms this claim).

\smallskip
For $x=(x_1,x_2,..,x_n,...)$ and a symbol $a$ denote $a \,x =(a,x_1,x_2,..,x_n,...).$
Given a Holder potential $A:\Omega \to \mathbb{R}$,  the Ruelle operator $\LLW$ acts on continuous functions $\psi:\Omega \to \mathbb{R}$ via: 
\begin{equation} \label{belo214}\LLW(\psi)(x)= \sum_{a=1}^m e^{A(ax)}\psi(ax), \, \text{for all}\, x\in \Omega.
\end{equation}

The dual of the Ruelle operator $\LLW$, denoted $\LLW^*$,  acts on finite measures on $\Omega$, and to say that 
$\LLW^*(\mu_1)=\mu_2$, means that for any continuous function $\psi$ we have
$$ \int \psi d \mu_2 = \int \LLW (\psi) \, d \mu_1.$$

We say that $\mu_A$ is the eigenprobability for the dual of the Ruelle operator if there exists $\lambda>0$ such that $\LLW^*(\mu_A)= \lambda\, \mu_A$. When $A$ is continuous an eigenprobability always exists, but may exist more than one (however the eigenvalue is unique).  In the case $A$ is Holder it is unique; for all this see \cite{PP90} or \cite{L1}.

We say that $A$ is normalized if $\LLW(1)=1$. In this case it  is usual to write $A$ in the form $A=\log J$, where $J:\Omega \to (0,1)$ is such that for all $x\in \Omega$ we get that  $\sum_{a=1}^m J(a\, x)=1.$ We  call Jacobian such function call $J$.

We say that $\mu$ is a Holder Gibbs probability, if there exists a normalized potential $A=\log J$, such  that, 
$\mathcal{L}_{\log J}^*(\mu)= \LLW^*(\mu)=\mu$. We say that $J$ is the Jacobian  of  the Holder Gibbs probability $\mu$.

The shift transformation $\sigma:\Omega \to \Omega$ is such that $\sigma(x_1,x_2,..,x_n,...)= (x_2,x_3,..,x_n,...).$

Note that for any $x\in \Omega$ we get  \begin{equation}  \label{FuTF} \LLW^*(\delta_{x}) = \sum_{\sigma(y)=x} J(y) \delta_y.
\end{equation}

We denote by $\mathcal{G}$ the set of all  Holder Gibbs probabilities.

\begin{theorem} \label{klh} (see \cite{PP90}) Given a Holder Gibbs probability $\mu$ associated to the Jacobian $J$, and any point $x_0\in \Omega$, we get that in the $1$-Wassertein distance 
$$ \lim_{n \to \infty}  (\mathcal{L}_{\log J}^*)^n\,(\delta_{x_0}) = \mu.$$

\end{theorem}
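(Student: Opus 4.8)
The plan is to reduce the assertion to the classical mixing property of the Ruelle operator by means of duality. First I would fix an arbitrary continuous test function $\psi\in\mathcal{C}$ and apply the defining duality relation $\int \psi\, d\,\mathcal{L}_{\log J}^*(\mu_1)=\int \mathcal{L}_{\log J}(\psi)\, d\mu_1$ iteratively, starting from $\delta_{x_0}$, to obtain
\[
\int_\Omega \psi\, d\,(\mathcal{L}_{\log J}^*)^n(\delta_{x_0})=\int_\Omega \mathcal{L}_{\log J}^n(\psi)\, d\delta_{x_0}=\mathcal{L}_{\log J}^n(\psi)(x_0).
\]
Thus the entire problem becomes the control of the iterates $\mathcal{L}_{\log J}^n(\psi)$ evaluated at the single point $x_0$, and the target value is simply $\int_\Omega \psi\, d\mu$.

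Next I would invoke the classical Ruelle--Perron--Frobenius theorem (see \cite{PP90}) in its normalized form. Since $A=\log J$ is H\"older and normalized, the maximal eigenvalue equals $1$, the eigenfunction is the constant function $1$, and by uniqueness of the eigenprobability in the H\"older case the fixed point $\mu$ (for which $\mathcal{L}_{\log J}^*(\mu)=\mu$) is exactly the associated eigenmeasure. The theorem then gives, for every H\"older $\psi$, the uniform (in fact exponentially fast) convergence $\mathcal{L}_{\log J}^n(\psi)\to \int_\Omega \psi\, d\mu$. Evaluating at $x_0$ yields $\int \psi\, d\,(\mathcal{L}_{\log J}^*)^n(\delta_{x_0})\to \int \psi\, d\mu$ for every H\"older $\psi$.

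To pass from H\"older to arbitrary continuous test functions I would use that each $(\mathcal{L}_{\log J}^*)^n(\delta_{x_0})$ is a probability measure: since $\mathcal{L}_{\log J}(1)=1$ we have $\mathcal{L}_{\log J}^n(1)=1$, so total mass is preserved, and positivity of the operator gives the uniform bound $\|\mathcal{L}_{\log J}^n(\psi)\|_\infty\le \|\psi\|_\infty$. Because the family of measures is uniformly bounded and the Lipschitz (hence H\"older) functions are dense in $\mathcal{C}$, a standard $3\varepsilon$-argument upgrades the convergence to all $\psi\in\mathcal{C}$. This is precisely weak-$*$ convergence $(\mathcal{L}_{\log J}^*)^n(\delta_{x_0})\to\mu$, which on the compact space $\Omega$ coincides with convergence in the Monge--Kantorovich distance $d_{MK}$, as recalled in \eqref{Wass1} (cf. \cite{Arbieto2016WeaklyHyperbolic}, Theorem~1.6).

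The genuine analytic content is entirely absorbed into the cited Ruelle theorem: the spectral-gap estimate $\mathcal{L}_{\log J}^n(\psi)\to \int \psi\, d\mu$ is the hard input, while the rest is bookkeeping via duality and density. The only point in the write-up where I would be careful is the legitimacy of the density step, namely that the uniform boundedness of the probabilities $(\mathcal{L}_{\log J}^*)^n(\delta_{x_0})$ indeed permits testing weak-$*$ convergence only on the dense subclass of H\"older functions; this is where the argument must be stated precisely.
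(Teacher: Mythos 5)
The paper offers no proof of this statement at all---it is quoted as a known result from Parry--Pollicott \cite{PP90}---and your argument is precisely the standard derivation that citation points to: duality reduces the claim to the uniform convergence $\mathcal{L}_{\log J}^n(\psi)\to\int_\Omega\psi\,d\mu$ given by the classical Ruelle--Perron--Frobenius theorem for normalized H\"older potentials, and your density/weak-$*$/Monge--Kantorovich bookkeeping (including the uniform bound $\|\mathcal{L}_{\log J}^n(\psi)\|_\infty\le\|\psi\|_\infty$ from normalization) is correct. So your proposal is correct and takes essentially the same approach the paper relies on.
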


 In the case, $A$ is Holder \cite{KLS} shows that the convergence is exponential  in the $1$-Wassertein distance. The support of the probability $(\mathcal{L}_{\log J}^*)^n\,(\delta_{x_0})$ is in the set of $n$-preimages of $x_0$ by $\sigma$.

\begin{theorem} \label{klh1}  (see \cite{L3}) The set $\mathcal{G}$ is dense in the set
 $\mathcal{M}_\sigma^i$.

\end{theorem}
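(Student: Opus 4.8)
The plan is to exhibit, for an arbitrary $\mu\in\mathcal{M}_\sigma^i$, an explicit sequence of Holder Gibbs probabilities converging to $\mu$ in the weak-$*$ (equivalently $d_{MK}$) topology, the approximating family being finite-memory Markov measures. First I would reduce to the case of a fully supported invariant measure. Let $\rho$ denote the uniform Bernoulli probability, which is $\sigma$-invariant and assigns positive mass to every cylinder $[a_1\cdots a_k]=\{x:x_1=a_1,\dots,x_k=a_k\}$. For $\e>0$ the convex combinations $\mu_\e=(1-\e)\mu+\e\rho$ are again $\sigma$-invariant, satisfy $d_{MK}(\mu_\e,\mu)\to 0$ as $\e\to 0$, and give strictly positive measure to all cylinders. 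Hence it suffices to approximate each such $\mu_\e$ by elements of $\mathcal{G}$.

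So fix a fully supported $\nu\in\mathcal{M}_\sigma^i$. For each $n$ I would define
$$J_n(x)=\frac{\nu([x_1 x_2\cdots x_n])}{\nu([x_2\cdots x_n])},$$
which is well defined and strictly positive by full support, depends only on the first $n$ coordinates of $x$, and is therefore locally constant, hence Lipschitz (in particular Holder) for the metric $d_{\Omega}$. A direct computation using the $\sigma$-invariance of $\nu$ (the identity $\sum_{a}\nu([a\,w])=\nu([w])$, coming from $\sigma^{-1}([w])=\bigsqcup_a[a\,w]$) gives the normalization $\sum_{a=1}^m J_n(ax)=1$, so $A_n=\log J_n$ is a normalized Holder potential. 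Taking $\nu_n$ to be the $\sigma$-invariant Markov measure of memory $n-1$ whose marginal on words of length $n-1$ coincides with that of $\nu$ and whose transition probabilities are encoded by $J_n$, one checks — again from invariance — that $\nu_n$ is stationary and that $\mathcal{L}_{\log J_n}^*(\nu_n)=\nu_n$; that is, $J_n$ is the Jacobian of $\nu_n$ in the sense of \eqref{FuTF}. Since $A_n$ is Holder the eigenprobability is unique (as recalled after that equation), so $\nu_n$ is genuinely the Gibbs probability of $A_n$ and thus $\nu_n\in\mathcal{G}$.

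It then remains to verify convergence. By construction $\nu_n([w])=\nu([w])$ for every word $w$ of length at most $n$, whence $\int g\,d\nu_n\to\int g\,d\nu$ for every $g\in C(\Omega,\mathbb{R})$ depending on finitely many coordinates; as such functions are dense in $C(\Omega,\mathbb{R})$ (they form a subalgebra separating points) and $\Omega$ is compact, this yields $d_{MK}(\nu_n,\nu)\to 0$. Chaining the two reductions — first choose $\e$ small, then $n$ large — produces elements $(\mu_\e)_n\in\mathcal{G}$ arbitrarily close to the original $\mu$, which establishes the density. The main obstacle I anticipate lies not in the convergence but in the bookkeeping of the second step: one must confirm that the object defined through the conditional ratios $J_n$ really is a $\sigma$-invariant probability admitting $J_n$ as its Jacobian, i.e. that $\mathcal{L}_{\log J_n}^*(\nu_n)=\nu_n$ holds exactly and that $\nu_n$ is the Gibbs probability of $A_n$ rather than merely some fixed point. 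The strict positivity secured by the full-support reduction is precisely what makes the transition structure irreducible and the eigenprobability unique, and it is the reason the preliminary perturbation $\mu\mapsto\mu_\e$ cannot be dispensed with.
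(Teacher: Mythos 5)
Your proof is correct. The paper itself offers no argument for this theorem --- it is quoted from \cite{L3} --- so there is nothing internal to compare against; your construction (perturb to full support by mixing with the uniform Bernoulli measure, then take the memory-$(n-1)$ Markov approximation with locally constant normalized Jacobian $J_n$, and invoke uniqueness of the Holder eigenprobability) is precisely the classical route by which this density result is established. The step you flagged as the main bookkeeping obstacle does close: the recursively defined $\nu_n$ is Kolmogorov-consistent because $\sum_{a}\nu([a\,w])=\nu(\sigma^{-1}([w]))=\nu([w])$ by invariance, it agrees with $\nu$ on all cylinders of length at most $n$, and the eigenmeasure identity $\nu_n([a\,w])=J_n(a\,w)\,\nu_n([w])$ for words $w$ of length at least $n-1$ extends to shorter words by summing over refinements, so $\mathcal{L}_{\log J_n}^*(\nu_n)=\nu_n$, hence $\nu_n\in\mathcal{G}$, and since $\mathcal{L}_{\log J_n}(1)=1$ this fixed point is automatically $\sigma$-invariant.
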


\begin{theorem} \label{klh2}  (see \cite{LM}, \cite{PP90} and also \cite{Parry}) Given a probability $\mu$ in 
$\mathcal{G}$, it can be weakly  approximated by a probability $\rho$, which is  a finite convex combination of probabilities with  support in periodic orbits. Of course, $\rho$ is a periodic orbit for $\mathfrak{T}.$

\end{theorem}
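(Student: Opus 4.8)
The plan is to realize the approximating probability $\rho$ as a weighted empirical distribution over periodic orbits of $\sigma$, to import the convergence to $\mu$ from the classical equidistribution theorem for equilibrium states, and then to read off the $\mathfrak{T}$ statement directly. First I would record that any $\mu \in \mathcal{G}$ is automatically $\sigma$-invariant. Indeed, if $A=\log J$ is the associated normalized potential, then for every $f \in C(\Omega,\mathbb{R})$ one has $\mathcal{L}_{\log J}(f\circ\sigma)(x)=\sum_{a=1}^m J(ax)\,f(\sigma(ax))=f(x)\sum_{a=1}^m J(ax)=f(x)$, so that $\int f\circ\sigma\, d\mu=\int \mathcal{L}_{\log J}(f\circ\sigma)\,d\mu=\int f\,d\mu$, using $\mathcal{L}_{\log J}^*(\mu)=\mu$. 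Equivalently $\mathfrak{T}(\mu)=\mu$ and $\mu\in\mathcal{M}_\sigma^i$.

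Next, for each $n\geq 1$ I would consider the finite set $\{x\in\Omega:\sigma^n(x)=x\}$ of $n$-periodic points of the full shift, and for each such $x$ the $\sigma$-invariant periodic-orbit measure $\nu_x=\frac{1}{n}\sum_{j=0}^{n-1}\delta_{\sigma^j(x)}$ already appearing in Remark \ref{pper}. Writing $S_nA(x)=\sum_{j=0}^{n-1}A(\sigma^j(x))$ and $Z_n=\sum_{\sigma^n(x)=x}e^{S_nA(x)}$, I set
\begin{equation*}
\rho_n\;=\;\frac{1}{Z_n}\sum_{\sigma^n(x)=x}e^{S_nA(x)}\,\nu_x\;\in\;\mathcal{M},
\end{equation*}
which is by construction a finite convex combination of probabilities supported on periodic orbits. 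The genuine content of the theorem is the assertion that $\rho_n\to\mu$ in the weak-$\ast$ topology, equivalently in the Monge-Kantorovich metric $d_{MK}$ since $\Omega$ is compact; this is the equidistribution of periodic orbits with respect to the equilibrium state of a H\"older potential on the mixing full shift, and I would invoke it from \cite{PP90} and \cite{Parry} rather than reprove it. I expect this convergence to be the main obstacle: its proof rests on controlling the normalizing constants $Z_n$ through the transfer operator $\mathcal{L}_{\log J}$ together with the H\"older distortion estimate bounding $|S_nA(x)-S_nA(y)|$ for points sharing a long common prefix, which is exactly what forces the periodic masses to agree, on cylinders, with the Gibbs weights of $\mu$.

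Finally, the clause ``$\rho$ is a periodic orbit for $\mathfrak{T}$'' is immediate and explains the ``of course''. Each $\nu_x$ is $\sigma$-invariant, hence $\mathfrak{T}(\nu_x)=\nu_x$; since $\mathfrak{T}$ is the (linear) push-forward and $\sigma$-invariance is preserved by convex combinations, $\mathfrak{T}(\rho_n)=\frac{1}{Z_n}\sum_{\sigma^n(x)=x}e^{S_nA(x)}\,\mathfrak{T}(\nu_x)=\rho_n$. Thus each approximant $\rho=\rho_n$ is a fixed point of $\mathfrak{T}$ in $\mathcal{M}$, that is, a (period-one) periodic orbit for $\mathfrak{T}$, as claimed. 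Should one wish to bypass the weighted equidistribution altogether, an alternative route is to use the specification property of the full shift and the classical weak-$\ast$ density of convex combinations of periodic-orbit measures in $\mathcal{M}_\sigma^i\supseteq\mathcal{G}$; the $\mathfrak{T}$-invariance conclusion is identical, since the approximants remain $\sigma$-invariant.
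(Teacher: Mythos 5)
Your proof is correct and follows essentially the same route as the paper: the paper states Theorem \ref{klh2} without proof, resting precisely on the weighted periodic-orbit equidistribution theorem of \cite{PP90} and \cite{Parry} that you invoke, and your approximant $\rho_n$ coincides (after grouping each orbit's Dirac masses, since $S_nA$ is constant along orbits of points fixed by $\sigma^n$) with the classical normalized sum $\frac{1}{Z_n}\sum_{\sigma^n(x)=x}e^{S_nA(x)}\delta_x$ treated in those references. Your added verification that each $\rho_n$ is $\sigma$-invariant, hence a fixed point (a period-one periodic orbit) of $\mathfrak{T}$, correctly disposes of the final clause.
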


\begin{lemma} \label{ouyt} The transformation $\mathfrak{T}:\mathcal{M} \to \mathcal{M}$ is surjective over $\mathcal{M}$. This follows from the fact that when $A$ is normalized, 
\begin{equation} \label{ELfun}\text{if}\,\, \mathcal{L}_A^* (\nu) =  \mu,\,\, \text{then}\,\,\mathfrak{T} (\mu)=\nu.
\end{equation}

\end{lemma}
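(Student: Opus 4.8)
The plan is to establish the displayed identity \eqref{ELfun} first, since the surjectivity statement is then an immediate consequence. The only structural ingredients needed are the duality relation defining $\LLW^*$, the defining property \eqref{pum0} of the push-forward, and the normalization hypothesis $\LLW(1)=1$, i.e. $\sum_{a=1}^m e^{A(ax)}=1$ for every $x\in\Omega$.

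To prove \eqref{ELfun}, I would suppose $\LLW^*(\nu)=\mu$, fix an arbitrary $f\in C(\Omega,\mathbb{R})$, and test $\mathfrak{T}(\mu)$ against $f$ using \eqref{pum0} and then the duality:
$$\int f\,d\mathfrak{T}(\mu)=\int (f\circ\sigma)\,d\mu=\int \LLW(f\circ\sigma)\,d\nu.$$
The crux is to evaluate $\LLW(f\circ\sigma)$. For each $x\in\Omega$ and each symbol $a$ one has $\sigma(ax)=x$, so
$$\LLW(f\circ\sigma)(x)=\sum_{a=1}^m e^{A(ax)}f(\sigma(ax))=f(x)\sum_{a=1}^m e^{A(ax)}=f(x),$$
where the last equality is exactly the normalization $\LLW(1)=1$. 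Hence $\int f\,d\mathfrak{T}(\mu)=\int f\,d\nu$ for every continuous $f$, and since $\Omega$ is compact this forces $\mathfrak{T}(\mu)=\nu$, establishing \eqref{ELfun}.

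For surjectivity, given any target $\nu\in\mathcal{M}$, I would produce a preimage explicitly. A normalized potential always exists—for instance the constant $A\equiv-\log m$ gives $\sum_{a=1}^m e^{A(ax)}=1$—and when $A$ is normalized $\LLW^*$ sends probabilities to probabilities, since $\int 1\,d\LLW^*(\nu)=\int\LLW(1)\,d\nu=\int 1\,d\nu=1$ and positivity is preserved. Therefore $\mu:=\LLW^*(\nu)\in\mathcal{M}$, and \eqref{ELfun} yields $\mathfrak{T}(\mu)=\mathfrak{T}(\LLW^*(\nu))=\nu$. As $\nu$ was arbitrary, $\mathfrak{T}$ is surjective.

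There is essentially no hard obstacle here: the argument is a one-line duality computation whose only substantive input is the collapse $\LLW(f\circ\sigma)=f$ under normalization, which reflects the fact that $\LLW^*$ is a right inverse of $\mathfrak{T}$ on $\mathcal{M}$. The single point deserving a sentence of care is confirming that $\LLW^*(\nu)$ is genuinely a probability, both of whose defining properties follow from $A$ being normalized; the identity \eqref{FuTF} already exhibits the Dirac case $\nu=\delta_x$, where $\mu=\sum_{\sigma(y)=x}J(y)\delta_y$ and one checks directly that $\mathfrak{T}(\mu)=\delta_x$.
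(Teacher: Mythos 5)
Your proposal is correct and follows essentially the same route as the paper: given $\nu$, take a normalized potential $A$, set $\mu=\mathcal{L}_A^*(\nu)$, and verify $\mathfrak{T}(\mu)=\nu$ by the duality computation $\int f\,d\mathfrak{T}(\mu)=\int \mathcal{L}_A(f\circ\sigma)\,d\nu=\int f\,\mathcal{L}_A(1)\,d\nu=\int f\,d\nu$. The only differences are cosmetic refinements on your side (exhibiting the explicit normalized potential $A\equiv-\log m$ and checking that $\mathcal{L}_A^*$ preserves probabilities), which the paper leaves implicit.
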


\begin{proof} Given $\nu\in \mathcal{M} $, is there exist $\mu \in \mathcal{M}$ such that $\mathfrak{T}(\mu)=\nu$?

Suppose that $A$ is {\bf any} Holder normalized potential, then, take $\mu= \mathcal{L}_A^* (\nu).$

For any continuous $f$ we get that
$$ \int f d \mathfrak{T}( \mu) =\int (f \circ \sigma) d \mu= \int (f \circ \sigma) d \mathcal{L}_A^* (\nu)= \int  \mathcal{L}_A (f  \circ \sigma)  d \nu=$$
$$ \int f \mathcal{L}_A(1) d \mu = \int f d  \nu.$$

Therefore, 
\begin{equation}\label{gter} \mathfrak{T}(\mu)= \nu.
\end{equation}

In \cite{LR3} it is shown that if $\mu_1$ is Holder equilibrium, then $\mathcal{L}_A^* (\mu_1)$ is not $\sigma$-invariant (unless it is the unique fixed point). Therefore, given a Holder Gibbs probability $\nu$, there exists
preimages $\mu$ of $\nu$ by $ \mathfrak{T}$, such that, are not $\sigma$-invariant.
\end{proof}

\begin{theorem}  \label{Bai} Given $\epsilon>0$,  a probability $\tilde{\mu}_2\in \mathcal{M}$, and $\sigma$-invariant probability $\tilde{\mu}_1$,
 there exist probabilities $\rho_1$ and $\mu_2$ in $\Omega$,  and $N>0$, such that
$$ d_{MK} (\rho_1, \tilde{\mu}_1)< \epsilon,\, d_{MK} (\mu_2, \tilde{\mu}_2)< \epsilon,\,\, \text{and}\,\,   \mathfrak{T}^N(\rho_1) = \mu_2.$$
\end{theorem}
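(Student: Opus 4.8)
The plan is to exploit the fact that $\mathfrak{T}^N$ is the push-forward by $\sigma^N$, so that an exact $\mathfrak{T}^N$-preimage of any measure can be produced by \emph{prepending} words of length $N$, and then to use the freedom in those prepended symbols to make the preimage close to $\tilde{\mu}_1$. For a word $w=(w_1,\dots,w_N)\in\{1,\dots,m\}^N$ and a point $y\in\Omega$, write $wy=(w_1,\dots,w_N,y_1,y_2,\dots)$ (extending the notation $a\,x$ introduced above) and let $\Phi\colon\{1,\dots,m\}^N\times\Omega\to\Omega$, $\Phi(w,y)=wy$, be the concatenation map; it satisfies $\sigma^N(wy)=y$. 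The two demands — that $\mathfrak{T}^N(\rho_1)$ equal $\mu_2\approx\tilde{\mu}_2$, and that $\rho_1\approx\tilde{\mu}_1$ — concern disjoint blocks of coordinates (the tail beyond position $N$, which $\sigma^N$ exposes, and the first $N$ coordinates, which $\sigma^N$ discards), and this is precisely what makes them simultaneously satisfiable.

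Concretely, I would fix $N=L$ with $2^{-(L+1)}<\epsilon$ and set $\mu_2:=\tilde{\mu}_2$, so that $d_{MK}(\mu_2,\tilde{\mu}_2)=0<\epsilon$ holds trivially. Let $\kappa$ be the distribution of the first $L$ coordinates under $\tilde{\mu}_1$, viewed as a probability on the finite set of length-$L$ words, i.e. $\kappa(v)=\tilde{\mu}_1([v])$ for each cylinder $[v]$ of length $L$, and define
\begin{equation*}
\rho_1:=\Phi_*\bigl(\kappa\otimes\tilde{\mu}_2\bigr),
\end{equation*}
the push-forward under $\Phi$ of the product measure $\kappa\otimes\tilde{\mu}_2$ on $\{1,\dots,m\}^N\times\Omega$. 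A direct computation using \eqref{pum0} applied $N$ times gives, for every continuous $f\colon\Omega\to\mathbb{R}$, that $\int f\,d\,\mathfrak{T}^N(\rho_1)=\int(f\circ\sigma^N)\,d\rho_1=\iint f(\sigma^N(wy))\,d\kappa(w)\,d\tilde{\mu}_2(y)=\int f\,d\tilde{\mu}_2$, since $\sigma^N(wy)=y$; hence $\mathfrak{T}^N(\rho_1)=\mu_2$ holds \emph{exactly}.

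It remains to check $d_{MK}(\rho_1,\tilde{\mu}_1)<\epsilon$. Since $N=L$, the first $L$ coordinates of $wy$ are exactly $w$, so for every length-$L$ word $v$ one has $\rho_1([v])=\kappa(v)=\tilde{\mu}_1([v])$; that is, $\rho_1$ and $\tilde{\mu}_1$ have identical cylinder probabilities up to depth $L$. Disintegrating both measures over these cylinders and taking the product of the conditionals on each $[v]\times[v]$ produces a coupling $\pi$ of $\rho_1$ and $\tilde{\mu}_1$ supported on pairs $(x,x')$ that agree on the first $L$ coordinates, so that $d_{\Omega}(x,x')\le 2^{-(L+1)}$. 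Feeding this coupling into the dual expression \eqref{Wass1} (any $1$-Lipschitz $f$ satisfies $\int f\,d\rho_1-\int f\,d\tilde{\mu}_1=\int(f(x)-f(x'))\,d\pi\le 2^{-(L+1)}$) yields $d_{MK}(\rho_1,\tilde{\mu}_1)\le 2^{-(L+1)}<\epsilon$, as required.

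The only real content, I expect, is the reconciliation described in the first paragraph: recognizing that closeness to the target is controlled by the coordinates beyond position $N$ (which we set equal to $\tilde{\mu}_2$) while closeness to $\tilde{\mu}_1$ is controlled by the first $N$ coordinates (which $\sigma^N$ erases), so there is no competition between the two conditions. I note that this argument does not actually use $\sigma$-invariance of $\tilde{\mu}_1$ — only that it is a probability on $\Omega$ — and that it gives the stronger conclusion $\mu_2=\tilde{\mu}_2$. An alternative closer to the density results quoted above would approximate $\tilde{\mu}_1$ by a finite convex combination of periodic-orbit measures (Theorems \ref{klh1} and \ref{klh2}) and $\tilde{\mu}_2$ by a finitely supported probability, then splice finite preimage words onto the supporting points; but the coordinate-block construction above is cleaner and self-contained.
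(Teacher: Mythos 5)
Your proof is correct, but it follows a genuinely different route from the paper's. The paper approximates $\tilde{\mu}_2$ by a finitely supported measure $\mu_2=\sum_j p_j\delta_{x_j}$, approximates the invariant $\tilde{\mu}_1$ by a H\"older Gibbs probability $\mu_1$ using the density result (Theorem \ref{klh1}), and then takes $\rho_1=(\mathcal{L}_{\log J_1}^*)^N(\mu_2)$: by Theorem \ref{klh} these dual-Ruelle iterates converge to $\mu_1$, and by \eqref{gter} in Lemma \ref{ouyt} the dual operator is a right inverse of $\mathfrak{T}$, so $\mathfrak{T}^N(\rho_1)=\mu_2$. You instead build $\rho_1$ by hand, splicing the length-$N$ cylinder distribution of $\tilde{\mu}_1$ onto $\tilde{\mu}_2$ via the concatenation map, and control $d_{MK}(\rho_1,\tilde{\mu}_1)$ by an explicit coupling supported on pairs agreeing in the first $N$ coordinates (the easy direction of Kantorovich duality, consistent with \eqref{Wass1}); your disintegration over length-$N$ cylinders is a finite sum, so there are no measurability issues, and the argument is sound. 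What each approach buys: yours is elementary and self-contained, needs no $\sigma$-invariance of $\tilde{\mu}_1$ (so it actually proves a stronger statement, which is what the Baire-category corollary really requires, since density must hold near \emph{arbitrary} probabilities), and achieves $\mu_2=\tilde{\mu}_2$ exactly; the paper's argument, by contrast, is in the spirit of the machinery being developed --- it exhibits $\rho_1$ as an explicit Ruelle-operator image, supported on $N$-preimages of finitely many points, and reuses Theorems \ref{klh} and \ref{klh1}, at the cost of invoking nontrivial外 results (density of Gibbs measures, exponential convergence) where none are needed.
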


\begin{proof} Given the probability $\tilde{\mu}_2$ we get an $\epsilon$-approximation $\mu_2$ of
$\tilde{\mu}_2$ of the form
$$ \mu_2= \sum_{j=1}^k p_j \, \delta_{x_j},$$
where $\sum_{j=1}^k p_j=1.$

From Theorem \ref{klh1} we can $\epsilon/2$-approximate $\tilde{\m}_1$ by a Holder Gibbs probability $\mu_1$ associated to  the Holder Jacobian $J_1$.

From Theorem \ref{klh}, for each $j=1,2,...,k$, we get that for large $N_j$,  the probability $(\mathcal{L}_{\log J_1}^*)^{N_j} (\delta_{x_j})$ is an $\epsilon/2$-approximation of $\mu_1$.  Therefore, for some uniform large $N$ we get that
$$ \rho_1 =\sum_{j=1}^k p_j (\mathcal{L}_{\log J_1}^*)^{N} (\delta_{x_j})= (\mathcal{L}_{\log J_1}^*)^{N}  (\sum_{j=1}^k p_j \,\delta_{x_j})=  (\mathcal{L}_{\log J_1}^*)^{N} (\mu_2)$$
is an  $\epsilon/2$-approximation of $\mu_1$, and therefore  an $\epsilon$-approximation of  $\tilde{\mu}_1$.

It follows from \eqref{gter} in Lemma \ref{ouyt} that
$ \mathfrak{T}^N  (  \rho_1) = \mu_2 .$
\end{proof}

\begin{corollary} There exists a dense orbit for $\mathfrak{T}$ in $\mathcal{M}.$

\end{corollary}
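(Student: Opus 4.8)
The plan is to deduce the existence of a dense orbit from the Birkhoff transitivity theorem. Since $\mathcal{M}$ is compact metric for $d_{MK}$, it is a complete separable metric space, hence a second countable Baire space, and $\mathfrak{T}:\mathcal{M}\to\mathcal{M}$ is continuous. I would fix a countable base $\{V_k\}_{k\in\mathbb{N}}$ of nonempty open sets and set $G_k:=\bigcup_{n\ge 0}\mathfrak{T}^{-n}(V_k)$. Each $G_k$ is open (a union of preimages of $V_k$ under the continuous maps $\mathfrak{T}^n$), and a point with dense forward orbit is exactly a point of $\bigcap_k G_k$. By the Baire category theorem it then suffices to show that every $G_k$ is dense, which reduces to the single transitivity statement: for every nonempty open $U$ and every nonempty open $V$ there are $\mu\in U$ and $n\ge 0$ with $\mathfrak{T}^n(\mu)\in V$.

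To prove this transitivity I would argue as in Theorem \ref{Bai}, but starting from a neighbourhood of an arbitrary measure rather than of a $\sigma$-invariant one. Given $U$, pick $\nu\in U$ and $\epsilon>0$ with the ball $B(\nu,\epsilon)\subset U$, and fix any target $\tilde{\mu}_2\in V$. Approximate $\nu$ in $d_{MK}$ by a finitely supported measure $\nu'=\sum_i q_i\,\delta_{y_i}$ with $d_{MK}(\nu',\nu)<\epsilon/2$. For each $i$ and each length $N$, the restriction of $\sigma^N$ to the cylinder $[\,(y_i)_1\cdots (y_i)_N\,]$ is a homeomorphism onto $\Omega$ (we are on the full shift), with inverse the ``prepend the word $(y_i)_1\cdots (y_i)_N$'' map; let $\mu_i$ be the pushforward of $\tilde{\mu}_2$ under this inverse. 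Then $\mu_i$ is supported on that cylinder, so $d_{MK}(\mu_i,\delta_{y_i})\le 2^{-(N+1)}$, while $\mathfrak{T}^N(\mu_i)=\sigma^N_*\mu_i=\tilde{\mu}_2$ exactly. Setting $\mu:=\sum_i q_i\,\mu_i$ and using joint convexity of $d_{MK}$ gives $d_{MK}(\mu,\nu')\le 2^{-(N+1)}$, hence $d_{MK}(\mu,\nu)<\epsilon$ for $N$ large, so $\mu\in U$; and $\mathfrak{T}^N(\mu)=\sum_i q_i\,\tilde{\mu}_2=\tilde{\mu}_2\in V$. This establishes the transitivity condition, indeed with exact hitting of the target.

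The main obstacle is precisely this reduction: Theorem \ref{Bai} supplies the connecting orbit only when the source neighbourhood surrounds a $\sigma$-invariant probability, whereas Birkhoff's criterion demands it for a neighbourhood of every measure. The resolution is the observation that $d_{MK}$-closeness to $\delta_{y_i}$ only constrains the first coordinates, so one is free to prescribe the ``tail'' of a nearby measure; pushing that tail forward by $\sigma^N$ reveals it and lets one land on any preassigned target. Once full topological transitivity is in hand, the Baire argument of the first paragraph produces a dense $G_\delta$ of points with dense orbit, and in particular a dense orbit for $\mathfrak{T}$ in $\mathcal{M}$; alternatively, one may phrase the same connecting construction through the right inverse $\mathcal{L}_A^*$ of $\mathfrak{T}$ furnished by Lemma \ref{ouyt}.
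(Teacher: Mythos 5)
Your proof is correct, and although it shares the Baire-category skeleton with the paper's argument, the key step is carried out by a genuinely different route --- one that is in fact more complete than the paper's own. The paper's proof is one line: take a countable dense family of balls $B(\rho_k,\frac{1}{r})$ and invoke Baire together with Theorem \ref{Bai} to conclude that $\bigcap_{k,r}\bigcup_{n}\mathfrak{T}^{-n}\bigl(B(\rho_k,\frac{1}{r})\bigr)$ is nonempty. For Baire one needs each open set $\bigcup_{n}\mathfrak{T}^{-n}\bigl(B(\rho_k,\frac{1}{r})\bigr)$ to be dense in \emph{all} of $\mathcal{M}$; but Theorem \ref{Bai}, whose proof rests on the density of Gibbs measures in $\mathcal{M}_\sigma^i$ (Theorem \ref{klh1}) and the convergence of the iterates of the dual Ruelle operator (Theorem \ref{klh}), only produces connecting orbits whose source lies near a $\sigma$-\emph{invariant} measure. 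Since $\mathcal{M}_\sigma^i$ is a closed proper subset of $\mathcal{M}$ (weak-$\ast$ limits of invariant measures are invariant), this yields density of those open sets only at invariant measures, and the paper leaves the remaining verification implicit. Your concatenation construction supplies exactly this missing piece: prepending the first $N$ symbols of each $y_i$ to the target measure produces, inside any neighbourhood of any finitely supported measure, a measure that $\mathfrak{T}^N$ maps \emph{exactly} onto the prescribed target, which is full topological transitivity with exact hitting; Birkhoff/Baire then gives a dense $G_\delta$ of points with dense orbit. What the paper's route buys is economy, reusing the thermodynamic machinery already developed (together with Lemma \ref{ouyt}'s right inverse $\mathcal{L}_A^*$ of $\mathfrak{T}$, which your closing remark correctly identifies as an alternative phrasing of the same connection); what your route buys is an elementary, self-contained argument that covers the arbitrary-source case that the Baire argument actually requires. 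The individual steps you use all check out: the cylinder $[(y_i)_1\cdots(y_i)_N]$ has $d_\Omega$-diameter at most $2^{-(N+1)}$, the restriction of $\sigma^N$ to it is a homeomorphism onto $\Omega$ for the full shift, $d_{MK}$ is jointly convex under convex combinations, and the pushforward is affine, so $\mathfrak{T}^N(\mu)=\tilde{\mu}_2$ holds exactly.
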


\begin{proof} As there exists a countable dense set of probabilities $\rho_n$, $n \in \mathbb{N},$ in $\mathcal{M}$, the result follows from last result and Baire Theorem. Indeed, for each $k,r \in \mathbb{N}$, take the ball $B(\rho_k, \frac{1}{r})$. From  Baire Theorem and Theorem \ref{Bai} we get that 

$$ \cap_{r,k=1}^\infty \, \cup_{n=1}^\infty \,\mathfrak{T}^{-n}  \left(B(\rho_k, \frac{1}{r})\right)$$
is not empty.
\end{proof}

\begin{exam}\label{ex2} Consider a probability $\mu \in \mathcal{M}$ and a natural number $k$. Take the partition $\{ \overline{x_1,x_2,...,x_k}\, , x_r \in \{1,2,...,m\}, r \in \{1,2,...,k\} \}$. Consider the lexicographic order on the set of finite words $(x_1,x_2,...,x_k)$. Now we re-index these words using this order and $\overline{\alpha_j}$ denotes the cylinder associated with the j-th 
word $\alpha_j=\alpha_j^k$, $j=1,2,...,m^k$. Finally, denote by $z_k\in \Omega $ the periodic orbit obtained by the repetition of the string $(\alpha_1,\alpha_2,..., \alpha_{m^k})$. 

Note that for $j>1$,
$$\sigma^{k\, j} ( \alpha_j, \alpha_{j+1},...,\alpha_{m^k},\alpha_1,...,\alpha_{j-1}, \alpha_j,... ) =$$
$$(\alpha_{j+1},...,\alpha_{m^k},\alpha_1,...,\alpha_{j-1}, \alpha_j,\alpha_{j+1},... ) .$$

Therefore, there exists a value $r_k= r^k \, k $, such that, $\sigma^{r_k} (z_k)=z_k.$

From \eqref{pum2}
$$ \mathfrak{T}^{r_k} ( \sum_{j=1}^{m^k} \mu(\overline{\alpha_j}) \delta_{ (\alpha_j, \alpha_{j+1},...,\alpha_{m^k},\alpha_1,...,\alpha_{j-1}, \alpha_j,... ) })=$$
\begin{equation} \label{pum6}
 \sum_{j} \mu(\overline{\alpha_j}) \delta_{ (\alpha_j, \alpha_{j+1},...,\alpha_{m^k},\alpha_1,...,\alpha_{j-1}, \alpha_j,... ) }.
\end{equation}

We denote $\mu_k \in \mathcal{M}$, $k\in \mathbb{N}$, the probability 
\begin{equation} \label{pum7} \mu_k =\sum_{j} \mu(\overline{\alpha_j}) \delta_{ (\alpha_j, \alpha_{j+1},...,\alpha_{m^k},\alpha_1,...,\alpha_{j-1}, \alpha_j,... ) },
\end{equation}
which is periodic of period $r_k$ for  $\mathfrak{T}$. Therefore, $\mu_k \in \mathfrak{M}_{\mathfrak{T}^{r_k}}^i.$

Note that $\mu_k(\overline{\alpha_j}) = \mu(\overline{\alpha_j}) $, and $\mu_k$ is a probability with weights in $\mathfrak{T}$-periodic orbits,  for any $k$. 

\end{exam}

\begin{lemma} \label{ouyt25}

The periodic points of $\mathfrak{T}$ are dense in $\mathcal{M}$.

\end{lemma}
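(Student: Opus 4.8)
The plan is to show that every $\mu \in \mathcal{M}$ is a limit, in the Monge--Kantorovich metric, of $\mathfrak{T}$-periodic probabilities, and for this I would invoke directly the construction already carried out in Example~\ref{ex2}. Fix an arbitrary $\mu \in \mathcal{M}$. For each $k \in \mathbb{N}$ that example produces a probability $\mu_k \in \mathcal{M}$ which is periodic of period $r_k$ for $\mathfrak{T}$ (hence a periodic point of $\mathfrak{T}$) and which has the crucial matching property $\mu_k(\overline{\alpha_j}) = \mu(\overline{\alpha_j})$ for every length-$k$ cylinder $\overline{\alpha_j}$, $j = 1,\dots,m^k$. It therefore suffices to prove that $d_{MK}(\mu_k, \mu) \to 0$ as $k \to \infty$.

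To estimate $d_{MK}(\mu_k,\mu)$ I would exploit the geometry of the metric \eqref{dist omeg}: any two points lying in a common length-$k$ cylinder first disagree at a coordinate of index at least $k+1$, so such a cylinder has diameter at most $2^{-(k+1)}$. Recall from \eqref{pum7} that on each cylinder $\overline{\alpha_j}$ the measure $\mu_k$ is concentrated at a single point $z_j = (\alpha_j,\alpha_{j+1},\dots) \in \overline{\alpha_j}$ carrying mass $\mu(\overline{\alpha_j})$. Given $f \in {\rm Lip}_{1}(\Omega)$, I would split both integrals over the partition into length-$k$ cylinders and compare $\int_{\overline{\alpha_j}} f\, d\mu$ with $\mu(\overline{\alpha_j}) f(z_j) = \int_{\overline{\alpha_j}} f \, d\mu_k$. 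Since $f$ is $1$-Lipschitz and $z_j$ lies in $\overline{\alpha_j}$, the bound $|f(y)-f(z_j)| \le d_\Omega(y,z_j)\le 2^{-(k+1)}$ holds for all $y \in \overline{\alpha_j}$, whence
\[
\left| \int_{\overline{\alpha_j}} f\, d\mu - \mu(\overline{\alpha_j}) f(z_j) \right| \le 2^{-(k+1)} \mu(\overline{\alpha_j}).
\]
Summing over $j$ and using that the cylinder masses add up to $1$ yields $\bigl| \int f \, d\mu - \int f \, d\mu_k \bigr| \le 2^{-(k+1)}$, uniformly in $f \in {\rm Lip}_{1}(\Omega)$.

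Taking the supremum over $f \in {\rm Lip}_{1}(\Omega)$ in \eqref{Wass1} then gives $d_{MK}(\mu_k,\mu) \le 2^{-(k+1)}$, so $\mu_k \to \mu$ and, $\mu$ being arbitrary, the $\mathfrak{T}$-periodic probabilities are dense in $\mathcal{M}$. I do not expect a genuine obstacle here: the entire dynamical content---producing a $\mathfrak{T}$-periodic probability that reproduces the cylinder statistics of $\mu$---is already contained in Example~\ref{ex2}, and what remains is only the elementary observation that matching all length-$k$ cylinder masses forces Monge--Kantorovich closeness of order $2^{-k}$. The one point deserving a little care is the uniformity of the oscillation bound over the (infinite, but uniformly bounded-diameter) family of cylinders, which is precisely what the single estimate $\mathrm{diam}(\overline{\alpha_j}) \le 2^{-(k+1)}$ secures.
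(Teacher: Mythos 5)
Your proposal is correct and follows essentially the same route as the paper's own proof: both take the $\mathfrak{T}$-periodic probability $\mu_k$ from Example~\ref{ex2}, split the integrals of a $1$-Lipschitz test function over the length-$k$ cylinders, and use the cylinder-diameter bound to control each term, summing the masses to get $d_{MK}(\mu,\mu_k)\le 2^{-k}$. Your diameter estimate $2^{-(k+1)}$ is even marginally sharper than the paper's $2^{-k}$, but the argument is otherwise identical.
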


\begin{proof}

Indeed, given any  measure $\mu$ and $\epsilon>0$, take $k$ such that $2^{-k} <\epsilon.$
 The diameter of each cylinder set  $\overline{x_1,x_2,...,x_k}$ is $2^{-k}.$

Consider a Lipchitz function $f$ with Lipschitz constant smaller or equal to $1$; then,  for $s_1,s_2\in \overline{x_1,x_2,...,x_k}$ we get that $|f(s_1)-f(s_2)|\leq 2^{-k}.$

Consider the $\mathfrak{T}$-periodic probability $\mu_k$ of expression \eqref{pum7}. We will show that $d_{MK}(\mu,\mu_k)\leq \epsilon$.

Indeed, 
$$\int f d\mu - \int  f d\nu_k\leq \sum_{j=1}^{m^k} |\int_{\alpha_j} f d \mu - \int_{\alpha_j} f d \mu_k|=$$
$$ \sum_{j=1}^{m^k} |\,\int_{\alpha_j}  f \, d \mu\, - \,f   (\alpha_j, \alpha_{j+1},...,\alpha_{m^k},\alpha_1,...,\alpha_{j-1}, \alpha_j,... )  \, \mu(\alpha_j)\,|\leq $$
\begin{equation}\label{pum9}
 \sum_{j=1}^{m^k}\mu(\alpha_j)
 \, 2^{-k}= 2^{-k} \leq \epsilon.
\end{equation}

\end{proof}


One way to generate probabilities  $\Xi \in\mathfrak{M}$ is the following: take a probability $\nu$
on $\Omega$ and define for each continuous function $F:\mathcal{M} \to \mathbb{R}$
the bounded  linear transformation
\begin{equation} \label{pom1} F\to \Lambda (F) =\int_{\Omega} F( \delta_x ) d \nu(x).
\end{equation}

By Riesz Theorem there exist a probability  $\Xi_\nu $ on $\mathcal{M}$ such that
for all $F \in \mathfrak{C}$ we get
\begin{equation} \label{pom2}  \Lambda(F) = \int_{\mathcal{M}} F(\mu)\, d \,\Xi_\nu\, (\mu).
\end{equation}

We say that $\Xi_\nu\in\mathfrak{M}$ is the {\bf  Level-2 version} of $\nu\in \mathcal{M}.$




\begin{exam} An interesting case is when the $\nu$ above is the maximal entropy $\mu_0$. Given a point $y_0 \in \Omega$ and $n\in \mathbb{N}$, denote by $x_j^m$, $j=1,2,...,m^n$, the $m^n$ solutions of $\sigma^n(x)=y_0.$ Then
\begin{equation} \label{pom12} F\to \Lambda (F) =\int_{\Omega} F( \delta_x ) d \nu(x)=\int_{\Omega} F( \delta_x ) d \mu_0 (x)= \lim_{n\to \infty} \frac{1}{m^n} \sum_{j=1}^{m^n} F(\delta_{x_j^m}).
\end{equation}

Then, in some sense $\Xi_{\mu_0}$ is a Level-2 version of the maximal entropy measure.

\end{exam}


\begin{definition} \label{barit}

Given a probability $\Pi \in \mathfrak{M}$, we call $m_\Pi$ the probability such that $\forall f\in C(\Omega)$
$$ \int_\mathcal{M} \nu(f)\,d\, \Pi\, (\nu)=m_\Pi (f)$$
the barycenter of $\Pi$.
\end{definition}

It is natural to say that $m_\Pi\in \mathcal{M}$ is the {\bf Level-1 version} of  $\Pi \in \mathfrak{M}$.

In this way: for any continuous function $f: \Omega \to \mathbb{R}$

\begin{equation} \label{eees}\int_\Omega f(x) d m_\Pi(x) = \int \,\left(\int_\Omega f(y)\, d\rho(y) \,\right)\, d \Pi\,(\rho).
\end{equation}

It is is easy to see that $m_{\delta_\rho}= \rho$ for any $\rho \in \mathcal{M}.$

The map $\Pi \to m_\Pi$ is continuous when using Monge-Kantorovich metric $d_{MK}$ obtained from $d$. The map $\Pi \to m_\Pi$ is a weak contraction  (see  Proposition \ref{tri}).

\begin{proposition}  \label{tri} The map $\Pi \to m_\Pi$ is a weak contraction.
\end{proposition}
\begin{proof}Indeed,
$d_{MK}(m_{\Pi_{1}},m_{\Pi_{2}}) = \sup\left\{\, \int_{\Omega} f d m_{\Pi_{1}}- \int_{\Omega} f d m_{\Pi_{2}}\, \,| \, \text{Lip}\, {f} \leq 1\,\right\}$, thus we need to evaluate
$$\int_{\Omega} f d m_{\Pi_{1}}- \int_{\Omega} f d m_{\Pi_{2}} = \int_{\mathcal{M}} \nu(f) d \Pi_{1}(\nu)- \int_{\mathcal{M}} \nu(f) d \Pi_{2}(\nu).$$
Define $G(\nu)= \nu(f)$. We claim that ${\rm Lip}(G) \leq 1$. Indeed,
$$G(\nu) - G(\nu')= \nu(f) -\nu'(f) \leq d_{MK}(\nu, \nu'),$$
because $\text{Lip}\, {f} \leq 1$. By definition,
$$\int_{\mathcal{M}} \nu(f) d \Pi_{1}(\nu)- \int_{\mathcal{M}} \nu(f) d \Pi_{2}(\nu) =$$ $$= \int_{\mathcal{M}} G(\nu) d \Pi_{1}(\nu)- \int_{\mathcal{M}} G(\nu) d \Pi_{2}(\nu) \leq d_{MK}(\Pi_{1}, \Pi_{2}),$$
because ${\rm Lip}(G) \leq 1$. Thus, $d_{MK}(m_{\Pi_{1}},m_{\Pi_{2}}) \leq d_{MK}(\Pi_{1}, \Pi_{2}).$\\

It is not a contraction, indeed, take $\Pi_{i}=\delta_{\delta_{x_i}}, \; i=1,2$ then
$$\int f(x) d m_{\Pi_{i}}(x) = \int \int f(x)\, d\nu(x) \,d \delta_{\delta_{x_i}}\,(\nu) =f(x_i),$$
so that $m_{\Pi_{i}}=\delta_{x_i}$. We recall that $G(\nu)= \nu(f)$ satisfy ${\rm Lip}(G) \leq 1$ provided that ${\rm Lip}(f) \leq 1$ (w.r.t. the respective metrics). Thus,
$$d_{MK}(\Pi_{1}, \Pi_{2}) \,\geq \sup_{G(\nu)= \nu(f), \; {\rm Lip}(f) \leq 1 } \int_{\mathcal{M}} G(\nu) d \Pi_{1}(\nu)- \int_{\mathcal{M}} G(\nu) d \Pi_{2}(\nu)=$$ $$= \sup_{{\rm Lip}(f) \leq 1 } \delta_{x_1}(f)  -  \delta_{x_2}(f)=d_{MK}(\delta_{x_2}, \delta_{x_2})= d_{MK}(m_{\Pi_{1}},m_{\Pi_{2}}).$$

From the other inequality we  get that $d_{MK}(m_{\Pi_{1}},m_{\Pi_{2}}) = d_{MK}(\Pi_{1}, \Pi_{2}),$ so the  weak contraction  is not  a contraction.
\end{proof}


\smallskip


\bigskip

Each $\mu \in \mathcal{M}^i_\sigma$ can be associated to a probability $\Theta_{\mu}$ on $\mathcal{M}^e_\sigma$ such that $m_{ \Theta_{\mu}}= \mu$  (see Theorem  6.4 in \cite{Mane} or next proposition). In this case, for any continuous $f:\Omega \to \mathbb{R}$ we get
\begin{equation} \label{poxa}  \int f(x)   d \mu(x) = \int \,\left(\int f(y) d \nu (y) \,\right)d \Theta_{\mu} (\nu).
\end{equation} The support of $ \Theta_{\mu}$ is the set of $\sigma$-ergodic probabilities.

$\Theta_{\mu}$ is called the ergodic decomposition of the $\sigma$-invariant probability $\mu$. Therefore,  $\m$ is the barycenter of $\Theta_{\mu}$.

\begin{proposition} For any $\sigma$-invariant $\mu$ we get that  
$m_{\Theta_\mu} = \mu$

\end{proposition}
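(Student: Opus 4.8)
The plan is to prove this purely by unwinding two definitions and matching them: the definition of the barycenter $m_{\Theta_\mu}$ (Definition \ref{barit}) against the defining averaging property \eqref{poxa} of the ergodic decomposition $\Theta_\mu$. No analytic estimate is needed; the proposition is essentially a reformulation of the statement, already anticipated in the text, that $\mu$ is the barycenter of $\Theta_\mu$.

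First I would recall that, by Definition \ref{barit} applied to $\Pi = \Theta_\mu \in \mathfrak{M}$, the barycenter $m_{\Theta_\mu} \in \mathcal{M}$ is the probability on $\Omega$ characterized by the identity
$$
m_{\Theta_\mu}(f) = \int_{\mathcal{M}} \nu(f)\, d\,\Theta_\mu(\nu) = \int_{\mathcal{M}} \left( \int_\Omega f(y)\, d\nu(y) \right) d\,\Theta_\mu(\nu),
$$
valid for every continuous $f \in C(\Omega)$. (Here the integrand $\nu \mapsto \nu(f)$ is continuous on $\mathcal{M}$ — indeed Lipschitz, exactly as shown for the map $G$ in the proof of Proposition \ref{tri} — hence $\Theta_\mu$-integrable, so the right-hand side is well defined.) On the other hand, the ergodic decomposition theorem (Theorem 6.4 in \cite{Mane}) furnishes $\Theta_\mu$ with precisely the averaging property \eqref{poxa}, namely that for every continuous $f$ the same double integral equals $\int_\Omega f\, d\mu$. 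Comparing the two displays gives $m_{\Theta_\mu}(f) = \mu(f)$ for all $f \in C(\Omega)$.

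Finally I would conclude that $m_{\Theta_\mu} = \mu$ as elements of $\mathcal{M}$. Since $\Omega$ is a compact metric space, two Borel probabilities agreeing on the integral of every continuous function are equal, by the uniqueness in the Riesz representation theorem. As for the main obstacle, there is in fact no genuine difficulty once the ergodic decomposition is in hand: the entire nontrivial content is imported from \cite{Mane}, and the argument above is just the observation that the barycenter defined by Definition \ref{barit} is the same object as the barycenter produced by \eqref{poxa}.
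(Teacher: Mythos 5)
Your proof is correct and follows essentially the same route as the paper: both compare the barycenter identity \eqref{eees} with the ergodic decomposition property \eqref{poxa} and conclude that the two measures agree on every continuous function. Your added remarks on the continuity of $\nu \mapsto \nu(f)$ and the Riesz-type uniqueness step merely make explicit what the paper leaves implicit.
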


\begin{proof} We will show that for any continuous $f$ we get that 
$$\int f(x) d m_{\Theta_\mu} (x)= \int f(x) d \mu(x).$$

From \eqref{poxa} we get
$$\int f (x) d \mu(x)=     \int \,(\int f(y) d \nu (y) \,)d \Theta_{\mu} (\nu),$$
and from \eqref{eees} we get
$$\int f(x) d m_{\Theta_\mu}(x) = \int \,(\int f(x)\, d\nu(x) \,)\,d \Theta_{\mu}\,(\nu).$$

\end{proof}


\bigskip

One can  consider the Level-2 version of the above.

\begin{theorem} \label{mman} (see Theorem  6.1 in \cite{Mane}) For any $\Pi\in \mathfrak{M}_{\mathfrak{T}}$ and any continuous function $\psi: \mathcal{M} \to \mathbb{R}$
$$ \int_\mathcal{M} \psi (\beta) d \Pi(\beta) =  \int_\mathfrak{K} \,  \left(\,\int_{  \mathcal{M}}\,\psi(\gamma) \, d \tilde{\Pi}(\gamma) \,\,\right)\,d\, \mathfrak{O}_{\Pi} (\tilde{\Pi}),     $$
where $\tilde{\Pi} \in\mathfrak{K}$, and $\mathfrak{K}\subset \mathfrak{M}_{\mathfrak{T}}^e$. For each $\Pi \in \mathfrak{M}_{\mathfrak{T}}$ the probability  $\mathfrak{O}_{\Pi}$ on $\mathfrak{M}_{\mathfrak{T}}$ is called the $\mathfrak{T}$-ergodic decomposition of $\Pi$.

\end{theorem}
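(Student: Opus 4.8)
The plan is to treat this statement as a direct instance of the abstract ergodic decomposition theorem (Theorem 6.1 in \cite{Mane}), now applied not to the base system $(\Omega,\sigma)$ but to the system $(\mathcal{M},\mathfrak{T})$ one level up. The abstract theorem asserts that for a continuous transformation of a compact metric space, every invariant Borel probability admits a unique decomposition as a generalized convex combination of ergodic invariant probabilities. The whole proof therefore reduces to checking that $(\mathcal{M},\mathfrak{T})$ satisfies the hypotheses of that theorem and then transcribing its conclusion into the present notation.

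First I would verify the structural hypotheses. The space $\mathcal{M}$ was shown to be a compact, convex, separable metric space when equipped with the Monge--Kantorovich metric $d_{MK}$ (this is where compactness of $\Omega$ is used, via Theorem 1.6 of \cite{Arbieto2016WeaklyHyperbolic}), so $\mathcal{M}$ plays the role of the compact metric phase space. The map $\mathfrak{T}:\mathcal{M}\to\mathcal{M}$ is continuous (by \cite{Bauer}, and also from the observation noted earlier that $\mu_n\to\mu$ implies $\mathfrak{T}(\mu_n)\to\mathfrak{T}(\mu)$). Finally, $\Pi\in\mathfrak{M}_{\mathfrak{T}}$ is by hypothesis invariant for $\mathfrak{T}$ in the sense of \eqref{puru2}. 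With these three facts in hand, the abstract theorem applies verbatim under the correspondence $\Omega\leftrightarrow\mathcal{M}$, $\sigma\leftrightarrow\mathfrak{T}$, $\mathcal{M}\leftrightarrow\mathfrak{M}$, and $\mathcal{M}_\sigma^e\leftrightarrow\mathfrak{M}_{\mathfrak{T}}^e$.

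Concretely, the decomposition is produced exactly as in the Level-1 case. Applying Birkhoff's ergodic theorem to $(\mathcal{M},\mathfrak{T},\Pi)$ gives, for $\Pi$-almost every $\beta\in\mathcal{M}$, convergence of the averages $\tfrac{1}{n}\sum_{k=0}^{n-1}\psi(\mathfrak{T}^k\beta)$; disintegrating $\Pi$ over the $\sigma$-algebra of $\mathfrak{T}$-invariant Borel subsets of $\mathcal{M}$ yields a family of conditional probabilities on $\mathcal{M}$, almost all of which are $\mathfrak{T}$-ergodic, and pushing $\Pi$ forward under the assignment $\beta\mapsto(\text{its ergodic component})$ defines the probability $\mathfrak{O}_\Pi$ supported on $\mathfrak{K}\subset\mathfrak{M}_{\mathfrak{T}}^e$. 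Integrating the defining identity of the conditional probabilities against the continuous test function $\psi$ then produces precisely the displayed formula.

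The proof contains no genuine obstacle beyond bookkeeping: the only point that genuinely requires attention is that the cited theorem is stated for an arbitrary continuous map on a compact metric space, so one must make explicit that $(\mathcal{M},d_{MK})$ is indeed such a space --- a fact already secured earlier in the paper --- rather than silently invoking a result about the special shift system $(\Omega,\sigma)$. Once this identification is in place, the statement follows with no further analytic work.
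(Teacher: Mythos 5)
Your proposal is correct and follows essentially the same route as the paper: the paper offers no argument beyond citing Theorem 6.1 of \cite{Mane} for the system $(\mathcal{M},\mathfrak{T})$, and your verification that $(\mathcal{M},d_{MK})$ is a compact metric space, that $\mathfrak{T}$ is continuous (via \cite{Bauer}), and that $\Pi$ is $\mathfrak{T}$-invariant is precisely the bookkeeping that citation tacitly assumes. Your additional sketch of how the decomposition arises (Birkhoff averages and disintegration over the invariant $\sigma$-algebra) is a faithful, slightly more explicit rendering of the same argument, not a different one.
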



\smallskip

 In this case $\Pi$ is the barycenter of $ \mathfrak{O}_{\Pi}$.
We will need  a non-dynamical version of the above kind of results.

\smallskip

\begin{remark} \label{kui}
The set of extreme points of the set $\mathcal{M}=\{$ probabilities on $\Omega\}$, is the set (see \cite{H}) 
$$\mathfrak{R}=\{\text{probabilities of the form}\, \, \delta_y\,\text{ where}\, y \, \text{ is any point in }\,\Omega\}\subset \mathcal{M}.$$  

Given $\mu \in \mathcal{M}$, for  some  $\tilde{\Theta}_{\mu}\in \mathfrak{M}$,  we get
\begin{equation} \label{poxat}  \int_\Omega f(x)   d \mu(x) = \int \,\left(\int f(z) d \delta_y (z) \,\right)d \tilde{\Theta}_{\mu} (\delta_y).
\end{equation} The support of $\tilde{\Theta}_{\mu}\in \mathfrak{M}$ is the set $\mathfrak{R}$.

The set of extreme points of the set $\mathfrak{M}=\{$ probabilities on $\mathcal{M}\}$, is the set 
$$\tilde{\mathfrak{K}}=\{\text{probabilities of the form}\,\, \delta_\mu\,\text{ where}\, \mu \, \text{ is any probability in }\,\mathcal{M}\}\subset \mathfrak{M}.$$  

For any $\Pi\in \mathfrak{M}$ there exists $ \mathfrak{O}_{\Pi}$ such that for  any continuous function $\psi: \mathcal{M} \to \mathbb{R}$

\begin{equation} \label{pocat}  \int_\mathcal{M} \psi (\beta) d \Pi(\beta) =  \int_\mathfrak{K} \,  [\,\int_{  \mathcal{M}}\,\psi(\gamma) \, d \tilde{\Pi}(\gamma) \,\,]\, d\,\mathfrak{O}_{\Pi} (\tilde{\Pi}),     \end{equation}
where $\tilde{\mathfrak{K}}$ has probability $1$ for    $\mathfrak{O}_{\Pi}$.

Then, we can write

\begin{equation} \label{abu1} \int_\mathcal{M} \psi (\beta) d \Pi(\beta) =  \int_\mathfrak{M} \,  [\,\int_{  \mathcal{M}}\,\psi(\gamma) \, d  \delta_\mu (\gamma) \,\,]\, \mathfrak{O}_{\Pi} (\delta_{\mu})=\int_\mathfrak{M} \,  \,\psi(\mu)\, \mathfrak{O}_{\Pi} (\delta_{\mu}) .    
\end{equation}

\end{remark}

In this case $\Pi$ is the barycenter of $\mathfrak{O}_{\Pi}$.

\begin{exam} \label{baba3} Given a probability $\mu\in \mathcal{M}$ we can associate, via barycenter, a probability $\hat{\mu}=\Pi_\mu\in \mathfrak{M}$ in the following way: denote $\mathfrak{K}= \{\delta_y, y\in \Omega\}\subset \mathcal{M}$, and then we  associate $\delta_y$ in $\mathcal{M}$ with $y\in \Omega$, and $\delta_{\delta_y}\in \mathfrak{M}$ with $y$. Given a set $\hat{B}\subset \mathfrak{K}$ in $\mathcal{M}$ we associate it to a set $B\in \Omega$ via this association.

Now we  denote by $\hat{\mu}\in \mathfrak{M}$ a probability, where $\mathfrak{K}= \{\delta_y, y\in \Omega\}$ has probability $1$, and such that, given a Borel set  $\hat{C}$ in $\mathcal{M}$
$$ \hat{\mu} (\hat{C}) = \int_\Omega I_{\hat{C}\cap \mathfrak{K}} (\delta_y) d \hat{\mu}(\delta_y) = \mu(\hat{C}\cap \mathfrak{K}).  $$

In this way, given a continuous function $F:\mathcal{M} \to \mathbb{R}$
\begin{equation} \label{hel1} \int F d \hat{\mu} = \int_\mathcal{M} F(\delta_y) d \hat{\mu} (\delta_y) = \int_\Omega F(\delta_y) d \mu(y).
\end{equation}

\end{exam}

\begin{remark} \label{sim1}
Given $n$ denote by $\Gamma_n$ the equality distributed probability on  $\mathfrak{M}$ with support on the set
$$\Lambda_n =\{ \delta_x \,|\, \sigma^n (x) =x\}.$$

That is $ \Gamma_n =\frac{1}{m^n } \sum_{x \in \Lambda_n} \delta_{\delta_x},$ because $\# \Lambda_n = m^n.$

\smallskip
By compactness there exist a probability $\Pi^p$ on  $\mathfrak{M}$ such that for a convergent subsequence $\Gamma_{n_k} \to \Pi^p$, when $k \to \infty$. We call $\Pi^p$ the {\bf periodic preference} probability. As $\Gamma_n$ is $\mathfrak{T}$ invariant for each $n$, it follows  that   $\Pi^p $ is $\mathfrak{T}$-invariant.
\end{remark}





\smallskip


\smallskip

\section{Convolution and a contractive dynamics in the space of probabilities} \label{concon}

Given a continuous function $R: \Omega \times \Omega \to \Omega$ we will  define a product convolution $*:\mathcal{M} \times \mathcal{M}$. Take two probabilities $\nu,\mu \in \mathcal{M}$ we set
$$(\nu * \mu) (A) =[ \nu \times \mu]\ (R^{-1}(A))$$
in the sense that for any continuous function $f:\Omega \to \mathbb{R}$
$$\int_\Omega f d  (\nu * \mu) =   \int_\Omega f  (R (x,y) )\, d \nu (x) \, d \mu(y).$$

$\nu * \mu$ is a new probability in $\mathcal{M}.$



\smallskip

We refer the reader to  \cite{Uggioni} and \cite{Bar} for results considering distinct  concepts of convolution that are  different from ours.

\begin{lemma} \label{con1} Given a convolution $*$ obtained from $R$, for a fixed $\eta$,  the map  $\mu \to \eta * \mu$ é
is $s$-Lipschitz with respect to Monge-Kantorovich distance, provided that  $R$ is  $s$-Lipschitz w.r.t. the second variable.
\end{lemma}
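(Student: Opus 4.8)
The plan is to work directly from the Kantorovich dual formula \eqref{Wass1} for $d_{MK}$ and reduce the estimate to a single well-chosen test function. Fix $\eta \in \mathcal{M}$, take two probabilities $\mu_1, \mu_2 \in \mathcal{M}$, and let $f \in \mathrm{Lip}_1(\Omega)$ be arbitrary. Using the defining identity for the convolution, I would write
\begin{equation*}
\int_\Omega f \, d(\eta * \mu_1) - \int_\Omega f \, d(\eta * \mu_2) = \int_\Omega g(y)\, d\mu_1(y) - \int_\Omega g(y)\, d\mu_2(y),
\end{equation*}
where the key object is the partially integrated function
\begin{equation*}
g(y) := \int_\Omega f\big(R(x,y)\big)\, d\eta(x).
\end{equation*}
The whole proof then hinges on controlling the Lipschitz constant of $g$, after which the right-hand side is bounded by $s\, d_{MK}(\mu_1,\mu_2)$ and we take the supremum over $f$.

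The central step is to show $\mathrm{Lip}(g) \leq s$. For $y, y' \in \Omega$ I would estimate
\begin{equation*}
|g(y) - g(y')| \leq \int_\Omega \big| f(R(x,y)) - f(R(x,y')) \big|\, d\eta(x) \leq \int_\Omega d_\Omega\big(R(x,y), R(x,y')\big)\, d\eta(x),
\end{equation*}
where the second inequality uses that $f$ is $1$-Lipschitz. Applying the hypothesis that $R$ is $s$-Lipschitz in its second variable gives $d_\Omega(R(x,y), R(x,y')) \leq s\, d_\Omega(y,y')$ for every fixed $x$, and since $\eta$ is a probability measure (total mass one), integrating yields $|g(y) - g(y')| \leq s\, d_\Omega(y,y')$. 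Thus $g/s \in \mathrm{Lip}_1(\Omega)$ when $s>0$; the degenerate case $s=0$ is immediate since then $g$ is constant and both integrals coincide.

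To conclude, I would rewrite $\int g\, d\mu_1 - \int g\, d\mu_2 = s\big(\int (g/s)\, d\mu_1 - \int (g/s)\, d\mu_2\big)$ and bound the bracketed difference by $d_{MK}(\mu_1,\mu_2)$, again via \eqref{Wass1}, obtaining $\int_\Omega f\, d(\eta*\mu_1) - \int_\Omega f\, d(\eta*\mu_2) \leq s\, d_{MK}(\mu_1,\mu_2)$. Taking the supremum over all $f \in \mathrm{Lip}_1(\Omega)$ delivers $d_{MK}(\eta*\mu_1, \eta*\mu_2) \leq s\, d_{MK}(\mu_1,\mu_2)$, which is exactly the claimed $s$-Lipschitz property. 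I do not expect any serious obstacle here: the argument is essentially a transfer of the Lipschitz constant through the fibered integral, and the only point requiring a little care is that integrating against the probability $\eta$ preserves rather than inflates the constant $s$ — which is precisely where the normalization $\eta(\Omega)=1$ is used.
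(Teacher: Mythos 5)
Your proof is correct and follows essentially the same route as the paper: both define the partially integrated function $g(y)=\int_\Omega f(R(x,y))\,d\eta(x)$, show $\mathrm{Lip}(g)\leq s$ using the $s$-Lipschitz hypothesis on $R$ and the fact that $\eta$ is a probability, and then conclude via the dual formula by rescaling $g/s$ and taking the supremum over $f\in\mathrm{Lip}_1(\Omega)$. Your explicit handling of the degenerate case $s=0$ is a small refinement the paper omits, but the argument is otherwise identical.
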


\begin{proof}
Indeed, consider $\mu, \mu' \in \mathcal{M}$ then
$$d_{MK}(\eta * \mu, \eta * \mu')= \sup_{Lip(f) \leq 1} \int f d (\eta * \mu) -\int f d (\eta * \mu')= $$
\begin{equation} \label{GG} \sup_{Lip(f) \leq 1}\int f  (R (x,y) )\, d \eta(x) \, d \mu(y) - \int f  (R (x,y) )\, d \eta (x) \, d \mu'(y).\end{equation}
Defining $g(y):=\int f  (R (x,y) )\, d \eta(x), \; \forall y \in \Omega$ we get,
$$|g(y) - g(y')| =| \int f  (R (x,y) )\, d \eta(x) - \int f  (R (x,y') )\, d \eta(x)| \leq $$ $$\leq \int  Lip(f) \cdot d_{\Omega}(R (x,y), R (x,y')) d \eta(x) \leq s d_{\Omega}(y, y')$$
thus $Lip(\frac{1}{s}g ) \leq 1$.  Returning to expression \eqref{GG} we obtain
$$d_{MK}(\eta * \mu, \eta * \mu') \leq s \sup_{Lip(f) \leq 1} [\int \frac{1}{s}g(y) \,d \eta (x) \, d \mu(y) - \int \frac{1}{s}g(y)\, d \eta (x) \, d \mu'(y)] \leq $$ $$\leq s \cdot d_{MK}(\mu, \mu').$$
\end{proof}

\begin{corollary} \label{este1}
  Let $\eta_{j}, \; j=1,2,...$ be a sequence of probabilities on $\Omega$ and $R: \Omega^2 \to \Omega$ a convolution kernel which is $s$-Lipschitz contractive w.r.t. second variable. Then the CIFS(countable iterated function system) $\mathfrak{R}=(\Omega, \phi_{j})$, $j \in \mathbb{N}$,  where $\phi_{j}(\mu)=\eta_{j}*\mu$, is uniformly contractible with Lipschitz constant $s$.

\end{corollary}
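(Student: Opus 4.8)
The plan is to deduce the statement directly from Lemma \ref{con1}, since the only new content beyond that lemma is the uniformity of the contraction constant across the whole countable family. First I would fix an arbitrary index $j \in \mathbb{N}$ and apply Lemma \ref{con1} with the fixed probability taken to be $\eta = \eta_{j}$. Because $R$ is assumed to be $s$-Lipschitz, and contractive so that $s<1$, with respect to its second variable, the lemma yields immediately that the map $\phi_{j}(\mu) = \eta_{j} * \mu$ satisfies
$$ d_{MK}(\phi_{j}(\mu), \phi_{j}(\mu')) = d_{MK}(\eta_{j} * \mu, \eta_{j} * \mu') \leq s\, d_{MK}(\mu, \mu') $$
for every pair $\mu, \mu' \in \mathcal{M}$.

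The key observation is that the Lipschitz constant furnished by Lemma \ref{con1} is precisely the contraction ratio $s$ of $R$ in its second variable; inspecting that proof, the constant arises solely from the inequality $d_{\Omega}(R(x,y), R(x,y')) \leq s\, d_{\Omega}(y,y')$ and never refers to the chosen probability $\eta$. Consequently the displayed bound holds with the \emph{same} $s$ for every index $j$ simultaneously, which is exactly the assertion of uniform contractibility of the family $\{\phi_{j}\}_{j \in \mathbb{N}}$. Since $s < 1$, each $\phi_{j}$ is a genuine contraction and the CIFS $\mathfrak{R} = (\mathcal{M}, \phi_{j})$ (the phase space being $\mathcal{M}$, where the maps $\phi_{j}$ act) is uniformly contractive with ratio $s$.

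I expect no serious obstacle here: the entire argument is an invocation of Lemma \ref{con1} followed by the remark that its constant is independent of the base probability. The only point requiring a moment's care is to confirm this uniformity in $j$, which is transparent from the structure of the earlier proof, where the Lipschitz constant of the auxiliary function $g(y) = \int f(R(x,y))\, d\eta(x)$ is controlled purely by the contraction modulus of $R$ and is insensitive to which $\eta_{j}$ plays the role of $\eta$.
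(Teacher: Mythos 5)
Your proposal is correct and is essentially the paper's own argument: the paper states this corollary without separate proof precisely because it follows from Lemma \ref{con1} applied to each $\eta_j$, together with the observation (explicit in your write-up) that the Lipschitz constant produced there depends only on the contraction modulus of $R$ in its second variable, not on the base probability. Your parenthetical correction that the phase space of the CIFS is $\mathcal{M}$ rather than $\Omega$ is also right; the statement's ``$(\Omega,\phi_j)$'' is a typo.
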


\begin{lemma} If $R(x,y)=R(y,x)$ we get for the associated convolution $*$:
$$\mu * \nu = \nu * \mu.$$
\end{lemma}

\begin{proof} $\forall f \in C(\Omega)$
$$\int_\Omega f d (\nu *\mu) =\int_\Omega f (R(x,y)) d \nu (x) d \mu(y)=$$
$$\int_\Omega f (R(y,x)) d \nu (x) d \mu(y)=\int_\Omega f d (\mu *\nu).$$
\end{proof}

 The next example will exhibit the concept of convolution that we will use here (which is not commutative).

\begin{exam} \label{conc}
For example, given $n \in \mathbb{N}$, we can get  a {\bf product convolution} $*_n$ in $\mathcal{M}$ via  $R_n (x,y)=R_n(x,y)=(\pi_n(x) ,y_1,y_2,...) =(x_1,..., x_n, y_1, y_2, ...)$, where  $\pi_{n}(x)=(x_1,..., x_n)$. In this case
$$d_{\Omega}(R_n (x,y), R_n (x,y')) \leq \frac{1}{2^n} d_{\Omega}(y, y'),$$
and thus $R_n$ is $\frac{1}{2^n}$-Lipschitz w.r.t. $y$.

The ${*}_n$ convolution is defined for pairs  of probabilities $\eta,\mu$ in $\mathcal{M}$: we set
$\eta {*}_n \mu \in \mathcal{M} $ as the probability such that for any continuous function $f:\Omega \to \mathbb{R}$
$$\int f (z) d  (\eta *_n \mu) (z) =   \int\int  f  (R_n (x,y) )\, d \eta (x) \, d \mu(y)=$$
$$ \int\int f(x_1,..., x_n, y_1, y_2, ...)\,d \eta (x) \, d \mu(y).$$

This product convolution is not commutative.

 \end{exam}

 \begin{exam} \label{umum}
 For example, when $n=1$, we  write  $\mu \to \eta *_1 \mu$.  One can show that
 $$(\eta*_1\mu) * \mu= (\eta*_1\mu).$$
 \end{exam}
 
 We leave the proof to the reader. Note that $(\eta_1*_1\mu) *_1 \mu$ is different from $ \eta *_1 (\eta *_1 \mu) $.

 \begin{exam} \label{umum} Now we introduce the dynamics of $\mathfrak{T}$, and at the same time we will combine it with the convolution  $\mu \to \eta *_1 \mu$.
 In this way, for any continuous function $f:\Omega \to \mathbb{R}$
 $$ \int f(z)\,d (\mathfrak{T} (\nu) *_1\, \mu )(z)=\int \int f (R_1(\sigma(x) ,y)) d\nu (x) \, d \mu(y)= $$
 \begin{equation} \label{um}\int \int f(\pi_1(\sigma(x)),y)d \nu (x) \, d \mu(y)=  \int \int f(x_2,y)d \nu (x) \, d \mu(y).  
 \end{equation}

 If $\nu$ is $\sigma$-invariant then 
  \begin{equation} \label{umde} \int f(z)\,d (\mathfrak{T} (\nu) *_1\mu )(z)=  \int \int f(x_1,y)d \nu (x) \, d \mu(y).  
 \end{equation}

  In this case is not necessarily true that $\mathfrak{T} (\nu) *_1\mu$ is $\sigma$-invariant, even if $\mu$ is  $\sigma$-invariant.

 Moreover,
 \begin{equation} \label{um2} 
  \mathfrak{T}(\delta_x) *_1\delta_z = \delta_{x_2, z},
 \end{equation} 
 where $x=(x_1,x_2,...,x_n,..)\in \Omega$ and $z \in \Omega$.

 Note that 
 \begin{equation} \label{um13} 
  \int f(z) d (\mathfrak{T}(\delta_x) *_1\mu) (z) = \int f(x_2,y) \, d \mu(y).
 \end{equation}

  If $\sigma(x)=x$, then 
\begin{equation} \label{um3} 
  \mathfrak{T}(\delta_x) *_1\delta_z = \delta_{x_1, z}.
 \end{equation}

 

 If we denote $\psi_\nu (\mu)= \mathfrak{T}(\nu)*_1 \mu$,  then 
  \begin{equation} \label{ilu234 }\psi_{\nu_2} (\psi_{\nu_1} (\mu))=\mathfrak{T}(\nu_2) *_1\, ( \mathfrak{T}(\nu_1)*_1 \mu)
  \end{equation}  is such  that for a continuous function $A:\Omega \to \mathbb{R}$
 $$  \int A(z) d  \psi_{\nu_2} (\psi_{\nu_1} (\mu))(z)=\int A(z)d\,  [\, \mathfrak{T}(\nu_2) *_1 (\mathfrak{T}(\nu_1)*_1 \mu  )\,] (z) $$
 $$\int \int A (\pi_1(\sigma(x)),y)    \,d  \nu_2 (x)\,d (\mathfrak{T}(\nu_1)*_1 \mu) (y) \, =$$
 $$\int \int  \int A( \pi_1 (\sigma(x)),\,\pi_1 (\sigma(u))\, \,,v)\, d \nu_1(u)\,\, d \mu(v) \,d \nu_2\,(x)= $$
 \begin{equation} \label{ilu1} \int \int \int \, A(x_2,u_2,v) d \nu_2(x) \, d \nu_1 (u) \, d \mu(v).\end{equation}\\
  If $\mu=\delta_{y}$, $\nu_2=\delta_{b}, \nu_1=\delta_{a}$, $a,b,y\in \Omega$, then 
 \begin{equation} \label{ilu2}   \int A\, d  \psi_{\nu_2} (\psi_{\nu_1} (\mu))=A( b_2 ,a_2  , y). 
 \end{equation}
 
\end{exam}

We present a particular example that will illustrate the theory.

\begin{exam} Given a probability $\nu$ and a probability $\mu$,
for any $n\in \mathbb{N}$
\begin{equation} \label{nimp} \mathfrak{T} (\delta_x) {*}_n \delta_y =\delta_{x_2,x_3,...,x_n,y},
\end{equation}
for $x=(x_1,x_2,...,x_n,..)$.

\smallskip
If $\nu$ is $\sigma$-invariant, then for any $n \in \mathbb{N}$
\begin{equation} \label{nimpn} \int f d (\mathfrak{T}(\nu) {*}_n \mu)=   \int  \int f(x_1,x_2 ,..., x_n,y) \, d \nu (x) \, d \mu(y).  
\end{equation}
\end{exam}

We leave the proof to the reader.

\section{IFSs in  probability spaces} \label{IFS1}
There are two main ways to introduce an IFS in $\mathcal{M}$, using a compact number of maps (which includes the case of a finite number) and using a noncompact (usually countable one) number of maps.\\
\subsection{The compact model and holonomic probabilities}  \label{oo1}

It was introduced in \cite{BOR23} the concept of IFS with measures (IFSm for short). In this case, $(X,d)$ is a compact metric space and $\Lambda$ is another compact space, $R=(\phi_{\lambda}, q:=(q_{x}))_{\lambda \in \Lambda}$ where $\phi_{\lambda}: X \to X$ are continuous maps and $q={(q_{x})}_{x\in X}$ is a collection of measures on $\Lambda$ for all $x \in X$, such that
\begin{enumerate}
  \item[H1] $\sup_{x\in X} q_{x}(\Lambda) < \infty$,
  \item[H2] $\inf_{x\in X} q_{x}(\Lambda) > 0$,
  \item[H3] $ x \mapsto  q_{x}(A)$ is a Borel map, i.e, is $\mathcal{B}(X)$-measurable for all fixed $A\in \mathcal{B}(\Lambda)$,
  \item[H4] $ x \mapsto  q_{x}$ is weak-$\ast$-continuous.
\end{enumerate}

The transfer operator acts on continuous functions $f:X \to \mathbb{R}$  is given by
\begin{equation} \label{esteB1}B_{q}(f)(x)= \int_{\Lambda} f(\phi_{\lambda}(x)) dq_{x}(\lambda).
\end{equation}

The dual operator acts in probabilities $\rho$ on $X$ via Riesz Theorem:
\begin{equation} \label{esteB2}B_{q}^*(\rho)(f)= \int_X B_{q}(f)(x) d\rho(x).
\end{equation}

Below we consider the convolution $*_n$, where $n$ is fixed,  previously defined   in Example \ref{conc}.

Our setup is:\\
1) $X=\mathcal{M}$, compact and $d=d_{MK}$;\\
2) $\Lambda=\mathcal{M}$, compact;\\
3) $\phi_{\nu}(\mu)=\mathfrak{T}(\nu)*_n\mu$;\\
4) $dq_{\mu}(\nu):=e^{A(\phi_{\nu}(\mu))} d \Pi_0(\nu)$, where $A$ is  a continuous potential $A:\mathcal{M} \to \mathbb{R}$ and $\Pi_0\in \mathfrak{M}$ is a fixed a priori probability over $\mathcal{M}$.

Thus, we will consider here the IFSm
\begin{equation} \label{esteS} S=(\mathcal{M},   \phi_{\nu},   q_{\mu})_{\nu \in \mathcal{M}}.
\end{equation}

Then, for a fixed $n \in \mathbb{N}$, we can write the transfer operator $ B_{\Pi_0}:=B_{\Pi_0,A,T} $ as: 
$$B_{\Pi_0}(F)(\mu)= \int_{\mathcal{M}} F(\mathfrak{T}(\nu)*_n\mu) dq_{\mu}(\nu)= $$
\begin{equation} \label{esteB3} \int_{\mathcal{M}} e^{A(\phi_{\nu}(\mu))} F(\mathfrak{T}(\nu)*_n\mu) d\Pi_0(\nu),
\end{equation}
for any continuous function $F:\mathcal{M} \to \mathbb{R}$

We will see that, under mild assumptions, the definitions 1), 2), 3), and 4), mentioned above in our setup satisfy the required hypothesis described in \cite{BOR23}, so we can derive the standard properties obtained in classical thermodynamic formalism for our  IFSm \eqref{esteS}.

Indeed, the above hypothesis (H1)-(H4) from \cite{BOR23} are trivially satisfied for $dq_{\mu}(\nu)=e^{A(\phi_{\nu}(\mu))} d \Pi_0 (\nu)$, if $A$ is at least continuous. But, some of the theorems will require more regularity from the IFS.

\smallskip

We say that $A: \mathcal{M} \to \mathbb{R}$ is $\Pi_0$-normalized if for any $\mu\in \mathcal{M} $ we get $B_{\Pi_0}(1)(\mu)=1$.
\medskip

\begin{exam}\label{prok} Following the above definition of  $ B_{\Pi_0}$ for $n=1$ consider a continuous function $\tilde{A}:\Omega \to \mathbb{R}$,  and for any $\rho \in \mathcal{M}$ we set $A(\rho)  =\, \int \tilde{A}\,  d \rho.$

Such potential $A$ satisfies the necessary conditions of the future  Theorem \ref{powers-description}.

Take $\Pi_0= \frac{1}{m} \sum_{j=1}^m  \delta_{ \delta_{(j,j,j,..,j..)}  }\in \mathfrak{M}.$ Considering the probability $\mu=\delta_y$, according to \eqref{um3} we get that 
\begin{equation} \label{esteBc7} \phi_{\delta_{(j,j,j,..,j..)}}(\mu)=\mathfrak{T}(\delta_{(j,j,j,..,j..)})*_1\mu=\mathfrak{T}(\delta_{(j,j,j,..,j..)})*_1\delta_y= \delta_{j,y} .\end{equation}

Therefore, for $\nu=  \frac{1}{m}\,\delta_{(j,j,j,..,j..)}$, we get from \eqref{um2} 
$$A ( \phi_\nu\,(\mu) ) =A ( \,\phi_{\,\delta_{(j,j,j,..,j..)}}(\mu) ) = \tilde{A} (j,y).$$

Given the continuous  function $f:\Omega \to \mathbb{R}$, consider the continuous function $F:\mathcal{M} \to \mathbb{R}$ such that $F(\rho) =\, \int_\Omega  f \,d \rho.$
Then, we get from \eqref{esteBc7}, \eqref{um2}  and  \eqref{belo214}
$$B_{\Pi_0}(F)(\delta_y)=  \int_{\mathcal{M}} e^{A(\phi_{\nu}(\delta_y))} F(\mathfrak{T}(\nu)*_1\delta_y) d\Pi_0(\nu)= $$
$$\int_{\mathcal{M}} e^{A(\phi_{\nu}(\delta_y))} F(\mathfrak{T}(\nu)*_1\, \delta_y)\,d\,  \frac{1}{m} \sum_{j=1}^m \delta_{   \delta_{(j,j,j,..,j..)}  }(\nu)= $$
$$\frac{1}{m} \sum_{j=1}^m \int_{\mathcal{M}} e^{A(\phi_{ \,\delta_{(j,j,j,..,j..)} }(\delta_y))} F(\mathfrak{T}( \delta_{(j,j,j,..,j..)} )*_1\delta_y)= $$
\begin{equation} \label{esteB37}\frac{1}{m}  \sum_{j=1}^m  e^{\, A(\delta_{j,y})  }    F( \delta_{j,y})  =\frac{1}{m}  \sum_{j=1}^m  e^{\tilde{A} (j,y)}    f(j,y)=   \mathcal{L}_{\tilde{A}} (f)(y).
\end{equation}
\end{exam}

\begin{remark} \label{prt}The last expression describes the action of the classical Ruelle operator for the a priori probability $\frac{1}{m} \sum_{j=1}^m \delta_j$  and the potential $\tilde{A}$ (see \cite{LMMS} or\cite{L1}).
Therefore, in some sense, the above definition of $B_{\Pi_0}$ is a Level-2 version of the classical Ruelle operator.
\end{remark}

\begin{exam} \label{orw}
Note that for $n=1$ we get $\phi_{\nu_2} (\phi_{\nu_1}(\mu))=\mathfrak{T}(\nu_2) *_1 ( \mathfrak{T} (\nu_1) {*}_1 \mu),  $ a case which was discussed in expression  \eqref{ilu1}.

In this case
$$B_{\Pi_0}^2 \, (F)(\delta_y)= $$
$$  \int_{\mathcal{M}} \int_{\mathcal{M}}e^{A(\phi_{\nu_2} (\phi_{\nu_1}(\delta_y)))+A (\phi_{\nu_1}(\delta_y))  } F(\,\phi_{\nu_2}\,(\mathfrak{T}(\nu_1)*_1\delta_y)\,) \,d\,\Pi_0(\nu_1)  d\,\Pi_0(\nu_2)= $$
$$ \sum_{r,s=1}^m  e^{\tilde{A} (r,s,y)+\tilde{A} (s,y) }    f(r,s,y).$$

In the general case we get that for any $\mu \in \mathcal{M}$
$$ B_{\Pi_0}^2(F)(\mu)=$$
\begin{equation} \label{esteB35}\int_{\mathcal{M}}   \int_{\mathcal{M}} e^{A(\phi_{\nu_2} (\phi_{\nu_1}(\mu)))+A (\phi_{\nu_1}(\mu))  } F(\phi_{\nu_2} (\phi_{\nu_1}(\mu)))) d\Pi_0(\nu_1)\,  d\Pi_0(\nu_2).
\end{equation}

\end{exam}

\smallskip

Given the potential $A$ we will derive a probability $\Pi_A\in \mathfrak{M}$ which will play the role of the Gibbs probability for the potential $A$ (see Definition \ref{Gler}).


A natural choice for the a priori probability $\Pi_0$ is the probability $\Pi^p$ which was described above.

We recall the main results derived from \cite{BOR23} (when applied to our setting):

\begin{theorem}\cite[Theorem 2.5]{BOR23}\label{powers-description}
Denote by  $S$ the  IFSm described by \eqref{esteS}
and suppose that there is a positive number $\lambda$ and
a strictly positive continuous function
$h: \mathcal{M} \to \mathbb{R}$ such that  $B_{\Pi_0}(h)=\lambda h$.
Then the following limit exists
\begin{align}\label{1overNlnBN}
\lim_{N \to \infty}
\frac{1}{N}
\ln\left(B_{\Pi_0}^{N}(1) (\mu) \right)
=
\log \lambda
\end{align}
the convergence is uniform in $\mu\in \mathcal{M}$ and
$\lambda=\lambda(B_{\Pi_0})$ is the spectral radius of $B_{\Pi_0}$ acting on $C(\Omega,\R)$.
\end{theorem}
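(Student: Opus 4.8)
The plan is to exploit the positivity of the operator $B_{\Pi_0}$ together with the compactness of $\mathcal{M}$ in order to sandwich $B_{\Pi_0}^N(1)$ between two explicit multiples of $\lambda^N$. First I would observe that, since $h$ is strictly positive and continuous on the compact space $\mathcal{M}$, there exist constants $0 < c \le h(\mu) \le C < \infty$ valid uniformly for all $\mu \in \mathcal{M}$. Next, from the defining formula \eqref{esteB3} the operator $B_{\Pi_0}$ is manifestly positive (the kernel $e^{A(\phi_\nu(\mu))}\,d\Pi_0(\nu)$ is a positive measure) and linear, so the monotonicity property $f \le g \Rightarrow B_{\Pi_0}(f) \le B_{\Pi_0}(g)$ holds and is preserved under iteration.

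Applying monotonicity to the inequalities $c\cdot 1 \le h \le C\cdot 1$ and iterating $N$ times gives $c\, B_{\Pi_0}^N(1) \le B_{\Pi_0}^N(h) \le C\, B_{\Pi_0}^N(1)$. Iterating the eigenvalue equation $B_{\Pi_0}(h)=\lambda h$ yields $B_{\Pi_0}^N(h)=\lambda^N h$, and combining this with $c \le h \le C$ produces the two-sided bound
$$\frac{c}{C}\,\lambda^N \;\le\; B_{\Pi_0}^N(1)(\mu) \;\le\; \frac{C}{c}\,\lambda^N, \qquad \forall\, \mu \in \mathcal{M}.$$
Taking logarithms, dividing by $N$, and letting $N \to \infty$ collapses both bounds to $\log\lambda$; since $c$ and $C$ are independent of $\mu$, the convergence of $\frac{1}{N}\ln B_{\Pi_0}^N(1)(\mu)$ to $\log\lambda$ is uniform on $\mathcal{M}$, which is exactly the assertion \eqref{1overNlnBN}.

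For the identification of $\lambda$ with the spectral radius, I would use that for a positive operator on $C(\mathcal{M},\mathbb{R})$ the operator norm is attained at the constant function, namely $\|B_{\Pi_0}^N\| = \|B_{\Pi_0}^N(1)\|_\infty$, because $|B_{\Pi_0}^N(F)| \le B_{\Pi_0}^N(|F|) \le \|F\|_\infty\, B_{\Pi_0}^N(1)$ for every continuous $F$. The sandwich above then gives $\frac{c}{C}\lambda^N \le \|B_{\Pi_0}^N\| \le \frac{C}{c}\lambda^N$, and Gelfand's spectral radius formula $\lambda(B_{\Pi_0}) = \lim_{N} \|B_{\Pi_0}^N\|^{1/N}$ forces $\lambda(B_{\Pi_0}) = \lambda$. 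The only genuinely delicate points are verifying that positivity survives iteration (immediate here from the product structure of the kernel) and that the operator norm of a positive operator reduces to its value on the constant $1$; everything else is the standard sandwiching argument. I therefore do not expect a substantial obstacle, the statement being essentially a transcription of the classical Ruelle/transfer-operator estimate quoted as \cite[Theorem 2.5]{BOR23}.
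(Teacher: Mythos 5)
Your proof is correct: the paper itself gives no proof of this statement (it is imported verbatim as \cite[Theorem 2.5]{BOR23}), and your sandwich argument --- bounding $c\le h\le C$ by compactness, iterating the eigenvalue equation to get $\frac{c}{C}\lambda^N \le B_{\Pi_0}^N(1) \le \frac{C}{c}\lambda^N$, and then identifying $\lambda$ with the spectral radius via positivity and Gelfand's formula --- is essentially the standard proof given in that reference. No gaps; the only cosmetic remark is that the spectral radius should be taken on $C(\mathcal{M},\mathbb{R})$ (the statement's ``$C(\Omega,\mathbb{R})$'' is a typo in the paper), which your argument in fact handles correctly.
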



In our case, the family of measures satisfies the requirements from \cite{BOR23}. Indeed, as  $d q_{\mu}(\nu)=e^{A(\phi_{\nu}(\mu))} d \Pi_0 (\nu)$,  we get that $u(\mu, \nu):= \log \frac{dq_{\mu}}{d \Pi_0}(\nu) = A(\phi_{\nu}(\mu))$ has the regularity prescribed in \cite{cioletti2019}.

\smallskip

Note that in the case $A$ is $\Pi_0$-normalized, that is $B_{\Pi_0}(1)=1$, we get that $\lambda=1$ and $h=1$.

\smallskip

\begin{theorem}\cite[Theorem 2.6]{BOR23}\label{exist posit eigenfunction}
  Let $S$ be the IFSm described by \eqref{esteS}. If $A$ is Lipschitz, then there exists a positive and continuous eigenfunction $h: \mathcal{M} \to \mathbb{R}$ such that $B_{\Pi_0}(h) = \lambda(B_{\Pi_0})h$.
\end{theorem}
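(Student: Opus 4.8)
The plan is to verify that the IFSm $S$ of \eqref{esteS} meets the regularity hypotheses of the general Ruelle--Perron--Frobenius theorem of \cite{BOR23}, and then to run the classical eigenfunction construction, whose engine is a bounded distortion estimate supplied by the uniform contractivity of the fibre maps. The decisive structural fact is that by Lemma \ref{con1} (with ratio $s=2^{-n}$) each map $\phi_\nu(\mu)=\mathfrak{T}(\nu)*_n\mu$ is a uniform contraction in $\mu$ with ratio $s<1$, independently of $\nu$; combined with the Lipschitz hypothesis on $A$, the weight $u(\mu,\nu):=A(\phi_\nu(\mu))$ is then Lipschitz in $\mu$ with constant ${\rm Lip}(A)\,s$, uniformly in $\nu$. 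Since $\Pi_0\in\mathfrak{M}$ is fixed and $A$ is continuous on the compact space $\mathcal{M}$, hypotheses (H1)--(H4) hold trivially, as already noted after \eqref{esteB3}; so one route is simply to invoke \cite[Theorem 2.6]{BOR23} verbatim, but I describe the underlying mechanism since that is the substance.

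First I would produce an eigenprobability for the dual. The operator $B_{\Pi_0}^*$ acts on the compact convex metrizable space $\mathfrak{M}$, and $\Pi\mapsto B_{\Pi_0}^*(\Pi)/\big(B_{\Pi_0}^*(\Pi)(1)\big)$ is a continuous self-map of $\mathfrak{M}$ (positivity and continuity of $B_{\Pi_0}$ make the denominator positive). By the Schauder--Tychonoff fixed point theorem there exist $\hat{\Pi}\in\mathfrak{M}$ and $\lambda>0$ with $B_{\Pi_0}^*(\hat{\Pi})=\lambda\,\hat{\Pi}$; testing against $1$ and invoking Theorem \ref{powers-description} identifies $\lambda$ with the spectral radius $\lambda(B_{\Pi_0})$.

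To build the eigenfunction I would consider the normalized iterates $g_N:=\lambda^{-N}B_{\Pi_0}^N(1)$; the dual relation gives $\int g_N\,d\hat{\Pi}=1$ for every $N$. The crux is a multiplicative bounded distortion estimate. Writing $B_{\Pi_0}^N(1)(\mu)$ as the integral over $\Pi_0^{\otimes N}$ of $\exp\big(\sum_{k=1}^{N} A(\phi_{\nu_k}\circ\cdots\circ\phi_{\nu_1}(\mu))\big)$ (compare \eqref{esteB35}) and comparing $\mu$ with $\mu'$, the composition at depth $k$ contracts $d_{MK}(\mu,\mu')$ by the factor $s^k$, so the two Birkhoff sums differ by at most $\sum_{k\ge 1}{\rm Lip}(A)\,s^{k}\,d_{MK}(\mu,\mu')=\frac{{\rm Lip}(A)\,s}{1-s}\,d_{MK}(\mu,\mu')$, uniformly in $N$ and in the integration variables. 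This yields $g_N(\mu)\le e^{\,D\,d_{MK}(\mu,\mu')}g_N(\mu')$ with $D=\frac{{\rm Lip}(A)\,s}{1-s}$, whence, using $\int g_N\,d\hat{\Pi}=1$, uniform two-sided bounds $0<c\le g_N\le C$ together with uniform equicontinuity of the family $\{g_N\}$. By Arzelà--Ascoli a Cesàro average $N^{-1}\sum_{k=0}^{N-1}g_k$ converges uniformly along a subsequence to a continuous $h$ with $c\le h\le C$, and passing $B_{\Pi_0}$ through this uniform limit gives $B_{\Pi_0}(h)=\lambda h$. I expect the bounded distortion/equicontinuity step to be the main obstacle, since it is the only place where the contraction ratio $s=2^{-n}$ and the Lipschitz regularity of $A$ must be combined quantitatively; the eigenprobability and the passage to the limit are soft consequences of compactness and positivity.
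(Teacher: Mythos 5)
Your proposal is correct, and it subsumes the paper's own treatment. The paper does not really prove this theorem: it checks that (H1)--(H4) hold for $dq_{\mu}(\nu)=e^{A(\phi_{\nu}(\mu))}\,d\Pi_0(\nu)$ once $A$ is continuous, asserts in one line that the weight $u(\mu,\nu)=A(\phi_{\nu}(\mu))$ has the regularity prescribed in \cite{cioletti2019}, and then quotes \cite[Theorem 2.6]{BOR23} verbatim --- exactly the ``short route'' you mention. What you add, and the paper leaves implicit, is (i) the actual verification of that regularity: Lemma \ref{con1} applied to $R_n$ shows every $\phi_{\nu}(\mu)=\mathfrak{T}(\nu)*_n\mu$ is $2^{-n}$-Lipschitz in $\mu$ uniformly in $\nu$, hence $u$ is Lipschitz in $\mu$ with constant $\mathrm{Lip}(A)\,2^{-n}$; and (ii) a self-contained reconstruction of the argument inside \cite{BOR23}: Schauder--Tychonoff for a dual eigenprobability (which in the paper appears separately as Theorem \ref{prop-exist-auto-medida}), the geometric-series bounded-distortion bound $\sum_{k\geq 1}\mathrm{Lip}(A)\,s^{k}$ with $s=2^{-n}$ coming from uniform contraction of the fibre maps (consistent with the iterate formula \eqref{esteB35}), two-sided bounds on $g_N=\lambda^{-N}B_{\Pi_0}^N(1)$ via $\int g_N\,d\hat{\Pi}=1$, and Arzel\`a--Ascoli applied to Ces\`aro averages --- the Ces\`aro step correctly handling the fact that a subsequential limit of $g_N$ alone need not be an eigenfunction. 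This is the standard Ruelle--Perron--Frobenius mechanism and is sound. One minor ordering quibble: you invoke Theorem \ref{powers-description} to identify $\lambda$ with the spectral radius already at the eigenprobability stage, but that theorem presupposes a positive continuous eigenfunction; either postpone the identification until after $h$ is built, or note that the uniform bounds $c\,\lambda^N\leq B_{\Pi_0}^N(1)\leq C\,\lambda^N$ together with positivity of $B_{\Pi_0}$ (so that the operator norm of $B_{\Pi_0}^N$ equals $\|B_{\Pi_0}^N(1)\|_\infty$) yield $\lambda=\lambda(B_{\Pi_0})$ directly by Gelfand's formula. In short: the paper's route buys brevity at the cost of leaving the Lipschitz-regularity check and the entire eigenfunction construction inside a black box; your route makes both explicit and correctly so.
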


\begin{definition} \label{DEFIG} Given  $A$ and $\Pi_0$ we say that $\hat{\Pi}=\hat{\Pi}_{A, \Pi_0}$ is eigenprobability for $A$  and $\Pi_0$ if
there exist a positive number  $\lambda$ such that for all continuous $F:\mathcal{M} \to \mathbb{R}$
$$B_{\Pi_0}^* ( \hat{\Pi} ) = \lambda \hat{\Pi}.$$

This means that for any $F:\mathcal{M} \to \mathbb{R}$ we get
$$ \lambda \,\int_{\mathcal{M}}  F(\rho) d \hat{\Pi} (\rho)= \int_{\mathcal{M}} B_{\Pi_0}(F) (\mu)\,d\,\hat{\Pi}(\delta_\mu)=$$
\begin{equation} \label{ou1}  \int_{\mathcal{M}} \,(  \int_{\mathcal{M}} e^{A(\phi_{\nu}(\mu))} F(\phi_{\nu}(\mu))) d\Pi_0(\nu)\,)  \, d\,\hat{\Pi}(\delta_\mu).
\end{equation}

\end{definition}

\begin{remark}  Note that the eigenvalue $\lambda$ is identified when we apply  \eqref{ou1}   to the function $F=1.$ Moreover,
 \eqref{ou1} shoud be true for functions of the form $F(\rho) = \int f d \rho$, where we take  a fixed continuous function $f:\Omega \to \mathbb{R}$.
\end{remark}

\begin{remark} From \eqref{abu1} the equation \eqref{ou1}  is equivalent to
$$  \lambda \,\int_{\mathcal{M}}  F(\rho) d \hat{\Pi} (\rho)=\lambda \, \int_\mathcal{M} F(\mu)\, d \mathfrak{O}_{\hat{\Pi}} (\delta_\mu) =  $$
$$\int_{\mathcal{M}} \int_\mathcal{M} \,  [\, \int_{\mathcal{M}} e^{A(\phi_{\nu}(\rho))} F(\phi_{\nu}(\rho)) d\Pi_0(\nu)\,)\,\,]\,d \tilde{\Pi}\, (\rho)\,d  \mathfrak{O}_{\hat{\Pi}} (\tilde{\Pi})= $$ 
$$\int_{\mathcal{M}} \int_\mathcal{M} \,  [\, \int_{\mathcal{M}} e^{A(\phi_{\nu}(\rho))} F(\phi_{\nu}(\rho)) d\Pi_0(\nu)\,)\,\,]\,d \delta_\mu\, (\rho)\,d  \mathfrak{O}_{\hat{\Pi}} (\delta_\mu) $$
\begin{equation} \label{abu2}\int_\mathcal{M} \,  [\, \int_{\mathcal{M}} e^{A(\phi_{\nu}(\mu))} F(\phi_{\nu}(\mu)) d\Pi_0(\nu)\,)\,\,]\,\,d  \mathfrak{O}_{\hat{\Pi}} (\delta_\mu).    
\end{equation}

We will present several examples  always taking $n=1$ in our main theorem above.
 

\end{remark}

\begin{exam}  \label{prof} Assume the hypothesis of Example \ref{prok}. Here we will apply the reasoning of Remark \ref{kui}.

Take  $\Pi_0= \frac{1}{m} \sum_{j=1}^m\delta_{   \delta_{(j,j,j,..,j..)}  }$ and a continuous potential $A:\mathcal{M} \to \mathbb{R}$. 

We will show that we can describe  the eigenprobability $\hat{\Pi}$ for $\Pi_0$ and $A$ of Definition \ref{DEFIG} via the
eigenprobability $\mu_{ B}$ for the dual of the Ruelle  operator $\mathcal{L}_B$ of a certain  continuous potential $B:\Omega \to \mathbb{R}$. Consider a continuous function $F:\mathcal{M} \to \mathbb{R}.$

We assume $\hat{\Pi}$ satisfies equation \eqref{abu2} for some $\lambda.$ From \eqref{abu2} and \eqref{um3} this means
$$ \lambda \,\int_{\mathcal{M}}  F(\rho) d \hat{\Pi} (\rho)= \lambda \, \int_\mathcal{M}  F (\mu ) d \mathfrak{O}_{\hat{\Pi}} (\delta_\mu)  = $$
$$ \int_\mathcal{M} \,  [\, \int_{\mathcal{M}} e^{A(\phi_{\nu}(\mu))} F(\phi_{\nu}(\mu)) d\Pi_0(\nu)\,)\,\,]\,\,d  \mathfrak{O}_{\hat{\Pi}} (\delta_\mu)=$$
\begin{equation} \label{ful2}  \int_{\mathcal{M}} \frac{1}{m}  \sum_{j=1}^m  e^{A( \phi_{\delta_j^\infty}(\mu) )} F(\phi_{\delta_j^\infty}(\mu))   \,d  \mathfrak{O}_{\hat{\Pi}} (\delta_\mu). 
\end{equation}

 


We will test if a probability $\hat{\Pi}$ that has support on probabilities of the form $\delta_{\delta_y}$, $y \in \Omega$, can satisfy \eqref{ful2}.  Using \eqref{pum000} and \eqref{um2}, 
in the affirmative case, this would imply that 
$$ \lambda \, \int_\Omega  F (\delta_y) d \mathfrak{O}_{\hat{\Pi}} (\delta_{\delta_y})  = $$  
$$\int_{\Omega} \frac{1}{m}  \sum_{j=1}^m  e^{A( \phi_{\delta_j^\infty}(\delta_y) )} F(\phi_{\delta_j^\infty}(\delta_y))   \,d  \mathfrak{O}_{\hat{\Pi}} (\delta_{\delta_y}) $$
\begin{equation} \label{ful4}  \int_{\Omega} \frac{1}{m}  \sum_{j=1}^m  e^{A( \delta_{(j,y)} )} F(\delta_{(j,y)})  \,d  \mathfrak{O}_{\hat{\Pi}} (\delta_{\delta_y}). 
\end{equation}

Next we will describe some expressions that will be useful in the future  Example \ref{abu1}.

Consider the continuous  potential $B(r) =B(r_1,r_2,..)=  A( \delta_{r})$ and the equilibrium probability
$\mu_B$ associated to the corresponding eigenvalue $\beta$.
Denote $G(y)= F( \delta_y)$.

Then, we get
$$ \beta \,\int_\Omega G(y) d \mu_B( y)= $$
\begin{equation} \label{ful5}  
\int_\Omega \mathcal{L}_A (G) (y) d  \mu_B( y)= \int_{\Omega} \frac{1}{m}  \sum_{j=1}^m  e^{A( \delta_{(j,y)} )} G(j,y))   d  \mu_B( y).
\end{equation}

Therefore, taking $d  \mathfrak{O}_{\hat{\Pi}} (\delta_{\delta_y})$ as $d \mu_B(y)$, and $\lambda=\beta$ we get that equality  \eqref{ful5} is equivalent to
equality \ref{ful4}.

The final conclusion is that $d\,\mathfrak{O}_{\hat{\Pi} }(\delta_{\delta_y})$ can be taken as $d \mu_B (y)$.

\smallskip

For such class of  potentials $A$ and such a priori  $\Pi_0$, the action of $\hat{\Pi}$ in each continuous function  $F$ is given by \eqref{abu1}

$$  \int_\mathcal{M} F (\beta) d \hat{\Pi}(\beta) =  \int_\Omega \,  [\,\int_{  \mathfrak{M}}\,F(\gamma) \, d \delta_{\delta_y} (\gamma) \,\,]\, d\,\mathfrak{O}_{\hat{\Pi}} (\delta_{\delta_y})= $$
\begin{equation} \label{abu91}  \int_\Omega \,  [\,\int_{  \mathcal{M}}\,F(\gamma) \, d \delta_{\delta_y} (\gamma) \,\,]\, d\,\mu_{B} (y)= \int_\Omega F(\delta_y) \, d\,\mu_{B} (y).
\end{equation}



 
\end{exam}

From now on, we can assume that the operator  is $\Pi_0$-normalized, that is $B_{\Pi_0}(1) = 1$, otherwise, we can replace the measures by
  \[ p_{\mu}(\nu) = \frac{h(\phi_{\nu}(\mu))}{\rho h(\mu)} q_{\mu}(\nu)\]
obtaining $B_{\Pi_0}(1) = 1$. Note, however, that in this procedure  we may lose the knowledge about the regularity of  $p_{\mu}$.

\begin{definition} \label{for1} If $A$ is $\Pi_0$-normalized we say that $\hat{\Pi}=\hat{\Pi}_{A, \Pi_0}$ is Gibbs if
$$B_{\Pi_0}^* ( \hat{\Pi} ) = \hat{\Pi}.$$

This means that for any $F:\mathcal{M} \to \mathbb{R}$ we get
$$ \int_{\mathcal{M}}  F(\rho) d \hat{\Pi} (\rho)=  \int_{\mathcal{M}} B_{\Pi_0}(F) (\mu)\,d\,\hat{\Pi}(\delta_\mu)=$$
\begin{equation} \label{esteBB38}  \int_{\mathcal{M}} \,(  \int_{\mathcal{M}} e^{A(\phi_{\nu}(\mu))} F(\phi_{\nu}(\mu)\,) d\Pi_0(\nu)\,)  \, d\,\hat{\Pi}(\delta_\mu).
\end{equation}

\end{definition}

\begin{exam}  \label{prof1} Assume the hypothesis of Example \ref{prof}.

Take   $\Pi_0= \frac{1}{m} \sum_{j=1}^m\delta_{   \delta_{(j,j,j,..,j..)}  }$ and assume that $A$ is normalized. We will show the existence of $\mathfrak{T}$-invariant probabilities.

We  showed in Example \ref{prof} that we can describe  the eigenprobability $\hat{\Pi}$ of Definition \ref{DEFIG} (or the one in Definition \ref{for1}) via the
eigenprobability $\mu_{ B}$ for the Ruelle  operator of a certain continuous potential $B$. We take $G(y) = F(\delta_y).$

Therefore,  can recover for such class of  potentials $A$, the action of $\hat{\Pi}$ in each continuous function  $F$ via

 $$ \int_\mathcal{M} F (\rho) d \hat{\Pi}(\rho) =  \int_{\mathcal{M}} \,\,\,(\int F\, d\delta_{\delta_y}) \, d \mu_{B}(y)=\int G(y)  d \mu_{B}(y).     $$

Therefore, from the above and \eqref{pum000} 

 $$ \int_\mathcal{M}( F \circ \mathfrak{T})  (\rho) d \hat{\Pi}(\rho) =  \int_{\mathcal{M}} \,\,\,\left(\int( F\circ \mathfrak{T})\, d\delta_{\delta_y}\right) \, d \mu_{B }(y)=$$
$$ \int G( \sigma(y))  d \mu_{B }(y) =   \int G(y)  d \mu_{B }(y)= \int_\mathcal{M} F (\rho) d \hat{\Pi}(\rho),$$
because $\mu_{B }$ is $\sigma$-invariant. Then $\hat{\Pi}$ is $\mathfrak{T}$-invariant.

\end{exam}

\begin{exam}  \label{prok1}
Assume the hypothesis of Example \ref{prok}.  We will show the existence of normalized potentials $A:\mathcal{M} \to \mathbb{R}.$

Then, for  $A(\rho)  \, =\, \int_\Omega \tilde{A}\,  d \rho$, $\Pi_0= \frac{1}{m} \sum_{j=1}^m\delta_{   \delta_{(j,j,j,..,j..)}  }$, and  a special class of functions $F(\rho) \,=\, \int_\Omega  f (x)\,d \rho(x),$ we showed that for any $y$
$$B_{\Pi_0}(F)(\delta_y) =\mathcal{L}_{\tilde{A}} (f)(y). $$

We will  assume from now on that $\mathcal{L}_{\tilde{A}}(1)=1$.

Then, if $p=\sum_k p_k \delta_{y_k}\in \mathcal{M}$, where $\sum_k p_k=1$,  we get from  above that
$B_{\Pi_0}(1)(p)=1.$ As  $\tilde{A}$ (and also $A$)  is continuous and any probability in $\mathcal{M}$ can be approximated by probabilities of the form $p$, we get that $A$ is $\Pi_0$-normalized. This means
$$ 1=   \,\, \int_{\mathcal{M}} e^{A(\phi_{\nu}(\mu))} \,\, d\Pi_0(\nu).$$

Consider the Gibbs  probability $\hat{\Pi}=\hat{\Pi}_{A, \Pi_0}$ associate to the $\Pi_0$-normalized potential $A$. 
We will show a natural relation of  $ \hat{\Pi}  $ with $m_{\hat{\Pi} } $.

$\hat{\Pi}$  should satisfy in this case the property: for any $G:\mathcal{M} \to \mathbb{R}$ (not just for $F$ of the above form)
$$ \int_{\mathcal{M}}  G(\rho) d \hat{\Pi} (\rho)=\int_{\mathcal{M}} \,( \, \int_{\mathcal{M}} e^{A(\phi_{\nu}(\mu))} G(\phi_{\nu}(\mu))\,\, d\Pi_0(\nu)\,)  \, d\,\hat{\Pi}(\mu). $$

This should be true in particular for the case when $G$ is in the particular  form of the $F$ above. The above means for our choice of $\Pi_0$
$$  \int_{\mathcal{M}} \,\, \int_{\mathcal{M}} e^{A(\phi_{\nu}(\mu))} F( \phi_{\nu}(\mu) ) \, d\Pi_0(\nu)\, d\,\hat{\Pi}(\mu) =$$
$$ \int_{\mathcal{M}}   \frac{1}{m} \sum_{j=1}^m  e^{\, A(\phi_{ \delta_{j^\infty }}(\mu))}F(  \phi_{\delta_{j^\infty }} (\mu) ) d\,\hat{\Pi}(\mu) =$$
$$ \int_{\mathcal{M}}   \frac{1}{m} \sum_{j=1}^m  e^{\,\int  \tilde{A} (j,z)\, d \mu (z)\, }\,(\, \int  f (j,z)\, d \mu (z)\,)d\,\hat{\Pi}(\mu)= $$
$$\int_{\mathcal{M}}  F(\rho) d \hat{\Pi} (\rho) = \int ( \int f(x) d \rho(x) \,)\,d \hat{\Pi} (\rho) = \int f(y) d m_{\hat{\Pi} } (y)  .$$




\smallskip


\end{exam}

\begin{theorem} There exists a duality of the a priori $\Pi_0$ and  the eigenprobability $\hat{\Pi}$. Moreover, if we interchange them,  the eigenvalue  in \eqref{ou1} is the same, and furthermore
$$ m_{\Pi_0} = m_{\hat{\Pi}}.$$
\end{theorem}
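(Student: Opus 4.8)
The plan is to reduce both the eigenfunction and the eigenprobability equations to a single coupling measure on $\mathcal{M}\times\mathcal{M}$ and to exploit the structural identity $\mathfrak{T}^n(\phi_\nu(\mu))=\mu$, valid for every $\nu,\mu\in\mathcal{M}$ (since $\phi_\nu(\mu)=\mathfrak{T}(\nu)*_n\mu$ prepends $n$ symbols and $\mathfrak{T}^n$ erases them, as one checks from \eqref{um}). First I would introduce the finite measure $\Gamma$ on $\mathcal{M}\times\mathcal{M}$ defined by $d\Gamma(\nu,\mu)=e^{A(\phi_\nu(\mu))}\,d\Pi_0(\nu)\,d\hat{\Pi}(\mu)$ and observe that the eigenprobability equation \eqref{ou1} says exactly that the push-forward of $\Gamma$ under $(\nu,\mu)\mapsto\phi_\nu(\mu)$ equals $\lambda\hat{\Pi}$. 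Moreover, when $A$ is $\Pi_0$-normalized the $\mu$-marginal of $\Gamma$ is $\hat{\Pi}$, while its $\nu$-marginal is an $e^{A}$-reweighting of $\Pi_0$. The whole construction is thereby encoded by the single object $\Gamma$ together with the map $\phi$ and its left inverse $\mathfrak{T}^n$.

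For the duality and the invariance of the eigenvalue, I would read off $\lambda$ from Theorem \ref{powers-description} as the exponential growth rate of $B_{\Pi_0}^N(1)$, expand $B_{\Pi_0}^N(1)(\mu)$ as the iterated integral of $\exp\!\big(\sum_k A(\phi_{\nu_k}\circ\cdots\circ\phi_{\nu_1}(\mu))\big)$ against $\Pi_0^{\otimes N}$ (as displayed for $N=2$ in Example \ref{orw}), and then apply Fubini to interchange the slot integrated against $\Pi_0$ with the slot integrated against $\hat{\Pi}$. The dual operator is the transfer operator built from the a priori probability $\hat{\Pi}$ acting in the complementary variable; since $\Gamma$ is a single bookkeeping device common to both orderings, the relevant total masses, and hence the growth rates computed from them, coincide. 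This yields the equality of eigenvalues, with $\Pi_0$ (more precisely, the $\nu$-marginal of $\Gamma$) playing the role of the dual eigenprobability.

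For the barycenter identity I would test \eqref{ou1} on the affine functionals $F(\rho)=\rho(f)=\int_\Omega f\,d\rho$, $f\in C(\Omega)$, which determine $m_{\hat{\Pi}}$ and $m_{\Pi_0}$ by Definition \ref{barit}. Using the explicit form of $\phi_\nu(\mu)(f)$ from \eqref{um} together with $\mathfrak{T}^n\circ\phi_\nu=\mathrm{id}$, the right-hand side should collapse, after the normalization $\int e^{A(\phi_\nu(\mu))}\,d\Pi_0(\nu)=1$, to an expression governed by the classical Ruelle operator $\mathcal{L}$ on $\Omega$, exactly as in the computations of Examples \ref{prof1} and \ref{prok1}; the goal is to show that both $m_{\Pi_0}$ and $m_{\hat{\Pi}}$ are fixed points of one and the same normalized transfer operator, so that $m_{\Pi_0}=m_{\hat{\Pi}}$ follows from uniqueness of its fixed point. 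The main obstacle is precisely the asymmetry of $\phi_\nu(\mu)=\mathfrak{T}(\nu)*_n\mu$ in its two arguments: the $\nu$-marginal of $\Gamma$ is an $e^{A}$-reweighting of $\Pi_0$ rather than $\Pi_0$ itself, so one must first renormalize using the eigenfunction $h$ of Theorem \ref{exist posit eigenfunction} to restore the symmetry between the two slots before the Fubini interchange and the fixed-point/uniqueness argument can be carried out rigorously.
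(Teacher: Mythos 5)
Your opening move is faithful to the paper: the coupling measure $\Gamma$ with $d\Gamma(\nu,\mu)=e^{A(\phi_{\nu}(\mu))}\,d\Pi_0(\nu)\,d\hat{\Pi}(\mu)$ is exactly the object the paper manipulates, and its entire proof is the Fubini symmetry of this double integral: taking $F=1$ in \eqref{ert1112} and in the interchanged equation \eqref{ert1122} (with $\Pi_1=\Pi_0$) identifies both eigenvalues with the total mass of $\Gamma$, giving $\lambda=\beta$; then testing with the affine functionals $F(\rho)=\int f\,d\rho$ of \eqref{ert0} makes both $\lambda\int f\,dm_{\hat{\Pi}}$ and $\lambda\int f\,dm_{\Pi_0}$ equal to the one integral $\int_{\mathcal{M}\times\mathcal{M}}(\phi_{\nu}(\mu))(f)\,d\Gamma(\nu,\mu)$, which is \eqref{ert1} and \eqref{ert2}, whence $m_{\Pi_0}=m_{\hat{\Pi}}$. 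Your ``total masses coincide'' remark is precisely this argument, and your identity $\mathfrak{T}^n\circ\phi_{\nu}=\mathrm{id}$ is true but plays no role.

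The genuine gap is in your final step, which is where you abandon this mechanism. You propose to prove $m_{\Pi_0}=m_{\hat{\Pi}}$ by showing both barycenters are fixed points of one and the same classical normalized Ruelle operator on $\Omega$ and invoking uniqueness of its fixed point. That reduction to a Level-1 operator exists only in the special situations of Examples \ref{prok}, \ref{prof} and \ref{prok1}, where $A$ is affine, $A(\rho)=\int\tilde{A}\,d\rho$, and $\Pi_0$ is atomic with atoms at Dirac measures; for a general continuous $A:\mathcal{M}\to\mathbb{R}$ and general $\Pi_0\in\mathfrak{M}$ (the setting of the theorem) the weight $e^{A(\phi_{\nu}(\mu))}$ depends on the whole measures $\nu,\mu$ and no operator $\mathcal{L}_{\tilde A}$ acting on $C(\Omega)$ emerges. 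Even where the reduction exists, the uniqueness you need is unavailable under the standing hypotheses: the potential is only continuous, and the paper explicitly recalls that continuous potentials may admit more than one eigenprobability (uniqueness requires H\"older regularity). And it is unnecessary: once $\Gamma$ is on the table, the barycenter identity is the one-line computation above, requiring no fixed-point theory and no eigenfunction $h$ --- your proposed $h$-renormalization addresses a non-issue, since Fubini applies to $\Pi_0\otimes\hat{\Pi}$ with the bounded continuous integrand as is. A secondary problem is your route to $\lambda=\beta$ via growth rates of $B_{\Pi_0}^N(1)$ and Theorem \ref{powers-description}: the $N$-step integrals of the interchanged operator compose maps in the pattern $\phi_{\phi_{\nu}(\mu)}(\mu')$, not $\phi_{\nu_2}(\phi_{\nu_1}(\mu))$, so the two iterated integrals are not matched by Fubini alone; the eigenvalue of \eqref{ou1} should instead be read off directly from $F=1$, i.e., as the mass of your own $\Gamma$, which is how the paper argues.
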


\begin{proof} Given $A:\mathcal{M} \to \mathbb{R}$, we will show a relation of the a priori $\Pi_0$ and the eigenprobability $\hat{\Pi} $ for  $A$.

In this case we get for any $F:\mathcal{M} \to \mathbb{R}$
\begin{equation}  \label {ert1112}  \lambda \,\int_{\mathcal{M}}  F(\rho) d \hat{\Pi} (\rho)=\int_{\mathcal{M}} \,(\, \int_{\mathcal{M}} e^{A(\phi_{\nu}(\mu))} F( \phi_{\nu}(\mu) ) \, d\Pi_0(\nu)\,)\, d\,\hat{\Pi}(\mu) .
\end{equation}

Now suppose that for the potential $A$ we take  the a priori as $\hat{\Pi}$, and then  we get  the eigenprobability, denoted by $\Pi_1$, for this pair associated to to some eigenvalue $\beta>0.$ Then, for any $F$
\begin{equation}  \label {ert1122}  \beta \,\int_{\mathcal{M}}  F(\rho) d \Pi_1 (\rho)=\int_{\mathcal{M}} \,(\, \int_{\mathcal{M}} e^{A(\phi_{\nu}(\mu))} F( \phi_{\nu}(\mu) ) \,d\,\hat{\Pi}(\mu)\,) d\Pi_1(\nu)\,.
\end{equation}

If in the above equation, we set $\Pi_1 =  \Pi_0$ we get in \eqref{ert1122} the same expression as in
\eqref{ert1112}, up to the values $\lambda$ and $\beta$. From Remark \ref{kui} we get that $\lambda=\beta$. As $F$ is any continuous function we get that $\Pi_0$ is the eigenprobability for the a priori $\hat{\Pi}.$

\eqref{ou1} should be true in particular for the case when $F$ is in the particular  form
 \begin{equation}  \label {ert0}F(\rho)= \int f(x) d \rho(x),
\end{equation}
 for some fixed $f:\Omega \to \mathbb{R}$. This means for our choice of $\Pi_0$ and the eigenprobability $\hat{\Pi}$ that for any $f$
$$\lambda \int f(y) d m_{\hat{\Pi}} (y)= \lambda \,\int \, ( \int f(x) d \rho(x)) d \hat{\Pi}(\rho)=\lambda \int F(\rho ) d \hat{\Pi}(\rho) = $$
$$\int_{\mathcal{M}} \,(  \int_{\mathcal{M}} e^{A(\phi_{\nu}(\mu))} \, (\,\int f(x)  d\, \phi_{\nu}(\mu)(x)\,)\, d\Pi_0(\nu)\,)  \, d\,\hat{\Pi}(\mu)=  $$
\begin{equation}  \label {ert1} \int_{\mathcal{M}} \,(\, \int_{\mathcal{M}} e^{A(\phi_{\nu}(\mu))} F( \phi_{\nu}(\mu) ) \, d\Pi_0(\nu)\,)\, d\,\hat{\Pi}(\mu) .
\end{equation}

Note that $\lambda=\beta$ in \eqref{ert1112} and \eqref{ert1122}.


Now suppose that for $A$ we take the a priori $\hat{\Pi}$, and then we get that
$\Pi_1=\Pi_0$ is the eigenprobability for this pair and the eigenvalue $\lambda>0.$ For $F$ of the above form \eqref{ert0} we get from \eqref{ert1} 
$$\lambda \int f(y) d m_{\Pi_0} (y)= \lambda \, \int  (\int f(x) d \rho(x)) d \Pi_0 (\rho)=\lambda \int F(\rho ) d \Pi_0= $$
$$\int_{\mathcal{M}} \,(  \int_{\mathcal{M}} e^{A(\phi_{\nu}(\mu))} \, (\,\int f(x)  d\, \phi_{\nu}(\mu)(x)\,)\,d\,\hat{\Pi}(\mu) \,)   )d \Pi_0 (\nu) \,\,=  $$
\begin{equation}  \label {ert2}\int_{\mathcal{M}} \,(\, \int_{\mathcal{M}} e^{A(\phi_{\nu}(\mu))} F( \phi_{\nu}(\mu) ) \, \,\,d \Pi_0 (\nu))   d\,\hat{\Pi}(\mu)  \,= \lambda \int f(y) d m_{\hat{\Pi}} (y).
\end{equation}

As the equality is for any $f:\Omega \to \mathbb{R}$ we get that $m_{\hat{\Pi}}= m_{\Pi_0}.$

\end{proof}

\begin{theorem}\cite[Theorem 3.2]{BOR23}\label{prop-exist-auto-medida}
Let $S$ be the IFSm described by \eqref{esteS}.
Then there exists a positive number $\rho \leq \rho(B_{\Pi_0})$, such that the set
\[
\mathcal{G}^{*}(\Pi_0)
=
\{
  \Pi \in \mathfrak{M}: B_{\Pi_0}^*(\Pi) =\rho\,\Pi\}
\]
is not empty.
\end{theorem}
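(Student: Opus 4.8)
The plan is to obtain the eigenprobability as a fixed point of a normalized version of the dual operator $B_{\Pi_0}^*$, via the Schauder--Tychonoff fixed point theorem on the compact convex set $\mathfrak{M}$; this is the standard Krylov--Bogolyubov-type argument for existence of an eigenprobability, and it uses only that $A$ is continuous (so that the hypotheses H1--H4 hold, as already observed right after \eqref{esteB3}).

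First I would record two structural facts about $B_{\Pi_0}$ acting on $\mathfrak{C}=C(\mathcal{M},\mathbb{R})$. It is a positive operator, since $e^{A(\phi_\nu(\mu))}>0$ forces $B_{\Pi_0}(F)\geq 0$ whenever $F\geq 0$; and the function $\beta(\mu):=B_{\Pi_0}(1)(\mu)=\int_{\mathcal{M}}e^{A(\phi_\nu(\mu))}\,d\Pi_0(\nu)$ is continuous and, because $A$ is continuous on the compact space $\mathcal{M}$ and $\Pi_0$ is a probability, strictly positive and bounded away from $0$, say $\beta(\mu)\geq c>0$ for all $\mu$. In particular $B_{\Pi_0}$ is bounded, with $\|B_{\Pi_0}(F)\|_\infty\leq\|\beta\|_\infty\,\|F\|_\infty$.

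Next I would define the normalized map $\mathcal{N}:\mathfrak{M}\to\mathfrak{M}$ by
\[
\mathcal{N}(\Pi)=\frac{B_{\Pi_0}^*(\Pi)}{\int_{\mathcal{M}}\beta\,d\Pi}.
\]
The denominator equals the total mass $B_{\Pi_0}^*(\Pi)(\mathcal{M})=\int_{\mathcal{M}}B_{\Pi_0}(1)\,d\Pi\geq c>0$, uniformly in $\Pi$, so $\mathcal{N}$ is well defined and sends each $\Pi\in\mathfrak{M}$ to a probability on $\mathcal{M}$; hence $\mathcal{N}$ maps $\mathfrak{M}$ into itself. Continuity of $\mathcal{N}$ in the weak-$\ast$ topology follows because $B_{\Pi_0}^*$ is weak-$\ast$ continuous (for each $F\in\mathfrak{C}$ one has $\int F\,dB_{\Pi_0}^*(\Pi)=\int B_{\Pi_0}(F)\,d\Pi$ with $B_{\Pi_0}(F)\in\mathfrak{C}$), while the normalizing factor $\Pi\mapsto\int\beta\,d\Pi$ is weak-$\ast$ continuous (as $\beta\in\mathfrak{C}$) and bounded below by $c$.

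Then I would apply the Schauder--Tychonoff theorem: $\mathfrak{M}$ is a nonempty compact convex subset of the locally convex space of finite signed Borel measures on $\mathcal{M}$ with the weak-$\ast$ topology (the compactness is exactly what was recalled when $\mathfrak{M}$ was introduced), and $\mathcal{N}$ is continuous, so it has a fixed point $\hat{\Pi}$. Such a fixed point satisfies $B_{\Pi_0}^*(\hat{\Pi})=\rho\,\hat{\Pi}$ with $\rho=\int_{\mathcal{M}}\beta\,d\hat{\Pi}>0$, which is the asserted eigenprobability. Finally, since $\rho$ is an eigenvalue of $B_{\Pi_0}^*$ and a bounded operator shares its spectral radius with its adjoint, one gets $\rho=|\rho|\leq\rho(B_{\Pi_0}^*)=\rho(B_{\Pi_0})$. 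The one point requiring care, and the natural main obstacle, is the uniform lower bound $\beta\geq c>0$: it is what keeps the normalization continuous and prevents $\mathcal{N}$ from degenerating, and it rests precisely on the compactness of $\mathcal{M}$ together with the strict positivity of the exponential weight $e^{A(\phi_\nu(\mu))}$.
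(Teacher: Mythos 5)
Your proposal is correct and follows essentially the same route as the source of this result: the paper does not reprove the statement but cites \cite{BOR23} (Theorem 3.2), whose proof is precisely this argument --- normalize the dual operator by the total mass $\int_{\mathcal{M}} B_{\Pi_0}(1)\,d\Pi$, use that $A$ continuous on the compact space $\mathcal{M}$ makes the weight $e^{A(\phi_{\nu}(\mu))}$ bounded away from zero (so hypotheses H1--H4 hold, as the paper notes after \eqref{esteB3}), and apply the Tychonoff--Schauder fixed point theorem on the compact convex set $\mathfrak{M}$ to get $B_{\Pi_0}^*(\hat{\Pi})=\rho\,\hat{\Pi}$ with $\rho=\int_{\mathcal{M}} B_{\Pi_0}(1)\,d\hat{\Pi}$. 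Your closing step $\rho\leq\rho(B_{\Pi_0})$ via the adjoint's spectral radius is also fine (equivalently, one integrates $B_{\Pi_0}^{N}(1)$ against $\hat{\Pi}$ and takes $N$-th roots).
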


\begin{definition}\label{in}
Given the cartesian product space $\hat{\mathcal{M}}\equiv \mathcal{M} \times\Lambda= \mathcal{M} \times \mathcal{M}$, for each $f\in C(\mathcal{M},\mathbb{R})$ consider the ``$\Lambda$-differential''
$df: \hat{\mathcal{M}} \to \mathbb{R}$ which is  defined by $df[\mu](\nu)\equiv f(\phi_{\nu}(\mu)) -f(\mu)$.
\end{definition}

\begin{definition}\label{invariance}
A measure $\hat{\Pi}$ over $\hat{\mathcal{M}}$ is said holonomic,
with respect to the IFS $S$, if
for all $f\in C(\mathcal{M},\mathbb{R})$ we have
\begin{align*}
\int_{\hat{\mathcal{M}}}df[\mu](\nu) \, d\hat{\Pi}(\mu,\nu)=0.
\end{align*}
Notation,
\[
\mathcal{H}(S)\equiv
    \{\hat{\Pi}  \, | \, \hat{\Pi}
    \text{ is a holonomic probability measure with respect to the IFSm } S\}.
\]
\end{definition}

We now define the Variational Entropy of a holonomic measure.
\begin{definition}\cite[Definition 5.1, Theorem 5.6]{BOR23} or \cite{LO} for a preceding point of view. \label{entropy}
Let $S$ the IFSm described by \eqref{esteS}, $\hat{\Pi} \in \mathcal{H}(S)$, $Q$ any probability with support on $\mathcal{M}$, and
$d\hat{\Pi}(\mu,\nu)=d\Pi_{\mu}(\nu)d\pi(\mu)$ a disintegration of $\hat{\Pi}$.
The variational entropy of $\hat{\Pi}$ with respect to the a priori probability $Q$ is defined by
\[
  h_{v}^{Q}(\hat{\Pi})
   \equiv \inf_{ \substack{g\,\in\,C(\mathcal{M}, \mathbb{R}) \\ g>0 }}
        \left\{ \int_{\mathcal{M}} \ln\frac{B_{Q}(g)(\mu)}{ g(\mu) }  \,\text{d}\pi(\mu) \right\} \leq 0,
\]
where $B_{Q}(g)(\mu)=\int_{\mathcal{M}} g(\phi_{\nu}(\mu)) dQ(\nu)$.
\end{definition}

We will consider from now on the operator $B_{\Pi_0}$ as in \eqref{esteB3} and the variational entropy $h_{v}^{\Pi_0}$, where $\Pi_0$ is the fixed a priori probability on $\mathcal{M}.$

Recall that, for the IFSm S, $dq_{\mu}(\nu):=e^{A(\phi_{\nu}(\mu))} d \Pi_0 (\nu)$, for a  continuous potential  $A:\mathcal{M} \to \mathbb{R}$.

\begin{definition} Following \cite[Definition 5.8]{BOR23},  we define the topological pressure for the potential $\psi:=e^A: \mathcal{M} \to \mathbb{R}$ by
\[\mathbb{P}(\psi)
   \equiv \sup_{\hat{\Pi} \in \mathcal{H}(S)} \inf_{ \substack{g\,\in\,C(\mathcal{M}, \mathbb{R}) \\ g>0 }}
        \left\{ \int_{\mathcal{M}} \ln\frac{B_{\Pi_0}(g)(\mu)}{ g(\mu) }  \,\text{d}\pi(\mu) \right\} \leq 0,
\]
where
\begin{equation} \label{kler}d\hat{\Pi}(\mu,\nu)=d\Pi_{\mu}(\nu)d\pi(\mu)
\end{equation}
is the disintegration of $\hat{\Pi}$, with respect to  the marginal $\pi$.
\end{definition}

\begin{proposition}\cite[Lema 5.9]{BOR23} The pressure satisfies 
  $$\mathbb{P}(\psi)=\sup_{\hat{\Pi} \in \mathcal{H}(S)} h_{v}^{\Pi_0}(\hat{\Pi}) + \int_{\mathcal{M}} \ln( \psi(\mu))  \, \text{d}\pi(\mu)$$
  \begin{equation} \label{kler39}\sup_{\hat{\Pi} \in \mathcal{H}(S)} h_{v}^{\Pi_0}(\Pi) + \int_{\mathcal{M}} A(\mu)  \, \text{d}\pi(\mu)
\end{equation}
\end{proposition}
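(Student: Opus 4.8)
The plan is to reduce the claimed identity to a single change of variables in the test function $g$, exploiting the fact that the weighted operator appearing in the pressure is the unweighted entropy operator applied to the modulated function $e^{A}g$. Writing $\phi_{\nu}(\mu)=\mathfrak{T}(\nu)*_n\mu$ and recalling from \eqref{esteB3} that $B_{\Pi_0}(g)(\mu)=\int_{\mathcal{M}} e^{A(\phi_{\nu}(\mu))}g(\phi_{\nu}(\mu))\,d\Pi_0(\nu)$, whereas the operator underlying $h_{v}^{\Pi_0}$ is the unweighted one $g\mapsto \int_{\mathcal{M}} g(\phi_{\nu}(\mu))\,d\Pi_0(\nu)$, the first step I would record is the pointwise identity $B_{\Pi_0}(g)(\mu)=\int_{\mathcal{M}}(e^{A}g)(\phi_{\nu}(\mu))\,d\Pi_0(\nu)$; that is, the pressure operator is exactly the entropy operator evaluated at the product function $e^{A}g$.

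Second, I would fix $\hat{\Pi}\in\mathcal{H}(S)$ with disintegration $d\hat{\Pi}(\mu,\nu)=d\Pi_{\mu}(\nu)\,d\pi(\mu)$ as in \eqref{kler} and perform the substitution $h:=e^{A}g$, equivalently $g=e^{-A}h$. Since $\mathcal{M}$ is compact and $A$ is continuous, both $e^{A}$ and $e^{-A}$ are continuous and bounded away from $0$ and $\infty$; hence $g\mapsto e^{A}g$ is a bijection of the cone $\{g\in C(\mathcal{M},\mathbb{R}):g>0\}$ onto itself. A direct computation then gives
\[
\ln\frac{B_{\Pi_0}(g)(\mu)}{g(\mu)}
=\ln\frac{\int_{\mathcal{M}} h(\phi_{\nu}(\mu))\,d\Pi_0(\nu)}{e^{-A(\mu)}h(\mu)}
=A(\mu)+\ln\frac{\int_{\mathcal{M}} h(\phi_{\nu}(\mu))\,d\Pi_0(\nu)}{h(\mu)}.
\]
Integrating against the probability $\pi$ (note that $\int_{\mathcal{M}}A\,d\pi$ is finite, by continuity of $A$ on the compact $\mathcal{M}$, and is independent of the test function) and taking the infimum over $g$, equivalently over $h$, yields for each fixed $\hat{\Pi}$ the termwise identity
\[
\inf_{g>0}\int_{\mathcal{M}}\ln\frac{B_{\Pi_0}(g)(\mu)}{g(\mu)}\,d\pi(\mu)
=h_{v}^{\Pi_0}(\hat{\Pi})+\int_{\mathcal{M}}A(\mu)\,d\pi(\mu).
\]

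Finally, I would take the supremum over $\hat{\Pi}\in\mathcal{H}(S)$ on both sides. Since the identity above holds verbatim for every holonomic $\hat{\Pi}$, the definition of $\mathbb{P}(\psi)$ as a sup–inf gives $\mathbb{P}(\psi)=\sup_{\hat{\Pi}\in\mathcal{H}(S)}\big[\,h_{v}^{\Pi_0}(\hat{\Pi})+\int_{\mathcal{M}}A\,d\pi\,\big]$, which is the asserted formula once one recalls $\ln\psi=\ln e^{A}=A$.

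As for the main obstacle: there is in fact no deep difficulty, and the holonomic constraint plays no role in the algebraic identity itself — it only fixes the common index set over which both suprema are taken, so that the termwise equality lifts directly to an equality of the two suprema. The one point deserving care is the legitimacy of the substitution $h=e^{A}g$, which must preserve the class of admissible (strictly positive, continuous) test functions; this is precisely where compactness of $\mathcal{M}$ is used, guaranteeing that $A$ is bounded and hence that $e^{\pm A}$ remain in $C(\mathcal{M},\mathbb{R})$ and strictly positive. Everything else is bookkeeping, and the statement may alternatively be quoted directly from \cite[Lema 5.9]{BOR23}, the substitution above being the concrete incarnation of that lemma in the present setting.
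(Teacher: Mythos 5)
Your proof is correct, and it is worth noting that the paper itself gives no proof of this proposition: it is quoted from \cite[Lema 5.9]{BOR23}, so your substitution argument supplies a self-contained derivation where the paper only cites. Your mechanism is the right one, and it is genuinely special to this setting: it works because the weight in $dq_{\mu}(\nu)=e^{A(\phi_{\nu}(\mu))}\,d\Pi_0(\nu)$ is a function of the image point $\phi_{\nu}(\mu)$ alone, so that $e^{A(\phi_{\nu}(\mu))}g(\phi_{\nu}(\mu))=(e^{A}g)(\phi_{\nu}(\mu))$ and the weighted operator \eqref{esteB3} is exactly the unweighted entropy operator of Definition \ref{entropy} evaluated at $h=e^{A}g$; for a general IFSm weight depending on the pair $(\mu,\nu)$ in an arbitrary way this identification fails, and one would need the longer arguments of \cite{BOR23}. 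The two analytic checkpoints you flag (that $g\mapsto e^{A}g$ is a bijection of the strictly positive cone of $C(\mathcal{M},\mathbb{R})$, and that $\int_{\mathcal{M}}A\,d\pi$ is finite and independent of the test function) are indeed the only places where compactness of $\mathcal{M}$ and continuity of $A$ enter, and you handle both. One refinement to your closing remark: holonomy is inert for your derivation from the paper's definition of $\mathbb{P}(\psi)$, but it is not entirely decorative when reconciling the statement with the general form in \cite{BOR23}, where the potential term is naturally integrated over pairs, namely $\int \ln\frac{dq_{\mu}}{d\Pi_0}(\nu)\,d\hat{\Pi}(\mu,\nu)=\int A(\phi_{\nu}(\mu))\,d\hat{\Pi}(\mu,\nu)$; it is precisely the holonomy identity applied to the test function $f=A$ that converts this pair integral into the marginal form $\int_{\mathcal{M}}A(\mu)\,d\pi(\mu)$ appearing in \eqref{kler39}.
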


\begin{definition} \label{lkle} A holonomic probability $\hat{\Pi}_A \in \mathcal{H}(S)$ satisfying the equality
\[\mathbb{P}(\psi)= h_{v}^{\Pi_0}(\hat{\Pi}_A) + \int_{\mathcal{M}} A(\mu)  \, \text{d}\pi_A(\mu),
\]
where $\pi_A$ comes from the disintegration of $\hat{\Pi}_A$ (as in \eqref{kler}), is called an equilibrium state for the potential $A:\mathcal{M} \to \mathbb{R}$.
\end{definition}

From \cite[Theorem 5.13]{BOR23} the set of equilibrium states is not empty for the IFSm S, since $dq_{\mu}(\nu):=e^{A(\phi_{\nu}(\mu))} dP(\nu)$ (a continuous and positive weight).
\begin{remark}
As we already show that there exists a positive eigenfunction for $B_{\Pi_0}$ (Theorem~\ref{exist posit eigenfunction}) and an eigenmeasure for  $B_{\Pi_0}^*$ (Theorem~\ref{prop-exist-auto-medida}), then it follows from \cite{BOR23}  that the pressure obtained by the entropy with respect to the a priori measure $\Pi_0$ satisfy $\mathbb{P}(\psi)= \ln(\rho(B_{\Pi_0}))$. Thus an equilibrium measure  $\hat{\Pi}_A$ satisfies
\[\ln(\rho(B_{\Pi_0})) = h_{v}^{\Pi_0}(\hat{\Pi}_A) + \int_{\mathcal{M}} A(\mu)  \, \text{d}\pi(\mu_A).
\]
\end{remark}

Recall that the projection $\Phi$ from $\hat{\Pi}$ over $\hat{\mathcal{M}}$ to $\mathfrak{M}$, defining  $\Pi = \Phi(\hat{\Pi})$, is given by
$$\int_{\mathcal{M}} g(\mu) d \Phi(\mu)=\int_{\mathcal{M}} g(\mu) d\Phi(\hat{\Pi})(\mu):= \int_{\hat{\mathcal{M}}} g(\mu) d\hat{\Pi}(\mu, \nu), \forall g.$$

\begin{definition} \label{Gler}
The probability $\Pi_A= \Phi(\hat{\Pi}_A)\in \mathfrak{M}$ is caled the projected equilibrium probability for $A$ and the a priori probability $\Pi_0\in \mathfrak{M}$.
\end{definition}

Consider the functional $m:C(\mathcal{M},\mathbb{R})\to \mathbb{R}$
given by
\begin{align}\label{def-funcional-p}
m(A) = \mathbb{P}(e^A).
\end{align}
It is immediate to verify
that $m$ is a convex and a finite valued functional.

\begin{theorem}\cite[Theorem 6.1, Corollary 6.2]{BOR23}
   Consider the IFSm $S$. If $m$ is G\^ateaux differentiable in $A$ then
   \[
\# \{\Phi(\hat{\Pi}):  \hat{\Pi}  \text{ is an equilibrium state for }  \psi=e^A \} =1.
\]
\end{theorem}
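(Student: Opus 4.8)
The plan is to recognize this as the standard consequence, in convex analysis, that a finite convex functional has a unique subgradient precisely at its points of G\^ateaux differentiability, and then to identify the subgradients of $m$ at $A$ with the projected marginals of the equilibrium states. First I would observe that, by the definition of the projection $\Phi$ given just before Definition \ref{Gler}, the probability $\Phi(\hat{\Pi})\in\mathfrak{M}$ is exactly the first marginal $\pi$ in the disintegration $d\hat{\Pi}(\mu,\nu)=d\Pi_{\mu}(\nu)\,d\pi(\mu)$ of \eqref{kler}. Hence the assertion $\#\{\Phi(\hat{\Pi})\}=1$ is equivalent to saying that all equilibrium states for $\psi=e^{A}$ share a common marginal $\pi$, and the entire argument reduces to proving uniqueness of this marginal.

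Next I would show that each such marginal is a subgradient of $m$ at $A$. Fix an equilibrium state $\hat{\Pi}_{A}$ with marginal $\pi_{A}$, and let $B\in C(\mathcal{M},\mathbb{R})$ be arbitrary. Since $h_{v}^{\Pi_0}(\hat{\Pi}_{A})$ depends only on $\hat{\Pi}_{A}$ (through its disintegration) and not on the potential, the variational formula \eqref{kler39} applied to $A+B$ gives
\begin{align*}
m(A+B)=\mathbb{P}(e^{A+B})
&\ge h_{v}^{\Pi_0}(\hat{\Pi}_{A})+\int_{\mathcal{M}}(A+B)\,d\pi_{A}\\
&=\Big(h_{v}^{\Pi_0}(\hat{\Pi}_{A})+\int_{\mathcal{M}}A\,d\pi_{A}\Big)+\int_{\mathcal{M}}B\,d\pi_{A}
=m(A)+\int_{\mathcal{M}}B\,d\pi_{A},
\end{align*}
where the last equality uses that $\hat{\Pi}_{A}$ is an equilibrium state for $A$ (Definition \ref{lkle}). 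Thus $m(A+B)-m(A)\ge\int_{\mathcal{M}}B\,d\pi_{A}$ for every $B$, i.e.\ $\pi_{A}$ is a subgradient of the convex functional $m$ at $A$.

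Finally I would invoke differentiability. Since $m$ is finite and convex on all of $C(\mathcal{M},\mathbb{R})$ (as noted after \eqref{def-funcional-p}), the one-sided directional derivative $m'(A;B)=\lim_{t\to 0^{+}}t^{-1}\big(m(A+tB)-m(A)\big)$ exists and is finite for every $B$, and the subgradient inequality above (used for $B$ and for $-B$) yields $-m'(A;-B)\le\int_{\mathcal{M}}B\,d\pi_{A}\le m'(A;B)$. G\^ateaux differentiability of $m$ at $A$ means exactly that $m'(A;B)=-m'(A;-B)$ for all $B$, so these bounds collapse to $\int_{\mathcal{M}}B\,d\pi_{A}=m'(A;B)$, a value independent of the chosen equilibrium state. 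Two probabilities on the compact metric space $\mathcal{M}$ that integrate every $B\in C(\mathcal{M},\mathbb{R})$ to the same number coincide (Riesz representation), so all equilibrium marginals agree, giving $\#\{\Phi(\hat{\Pi})\}=1$.

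The main obstacle I anticipate is not any single computation but the correct packaging of the convex-analytic input: one must verify that $m$ is finite and convex on the whole space (already observed after \eqref{def-funcional-p}), that its one-sided directional derivatives are therefore finite, and that the sandwich $-m'(A;-B)\le\int_{\mathcal{M}}B\,d\pi_{A}\le m'(A;B)$ genuinely becomes an equality under G\^ateaux differentiability. Everything else — existence of equilibrium states via \cite[Theorem 5.13]{BOR23}, the variational principle \eqref{kler39}, and the identification $\Phi(\hat{\Pi})=\pi$ — is already available in the excerpt.
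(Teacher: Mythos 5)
Your proposal is correct: the identification $\Phi(\hat{\Pi})=\pi$ from the disintegration \eqref{kler}, the observation that the entropy term in \eqref{kler39} is potential-independent (so each equilibrium marginal $\pi_A$ is a subgradient of the convex functional $m$ at $A$), and the collapse of the two-sided bounds $-m'(A;-B)\le\int_{\mathcal{M}}B\,d\pi_A\le m'(A;B)$ under G\^ateaux differentiability, followed by Riesz representation, are all sound, and existence is correctly delegated to \cite[Theorem 5.13]{BOR23}. Note that the paper itself gives no proof of this statement, deferring entirely to \cite[Theorem 6.1, Corollary 6.2]{BOR23}; your argument is exactly the standard tangent-functional (subgradient uniqueness) argument on which that cited result is based, so it supplies, in essentially the intended way, the proof the paper omits.
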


\end{document}